\documentclass[12pt]{article}

\usepackage{fullpage,indentfirst}
\usepackage{amsfonts,amssymb,stmaryrd}
\usepackage{diagrams}

\usepackage{pstricks}
\usepackage{pst-node}



\newcounter{paragrafsubsub}[subsubsection]
\renewcommand{\theparagrafsubsub}{%
\thesubsubsection.\arabic{paragrafsubsub}}
\newcommand{\paragrafsubsub}{%
\ifcase\value{paragrafsubsub} {}\else \vspace{0.2cm}\fi
\refstepcounter{paragrafsubsub}
{\bf \theparagrafsubsub}\hspace{0.2em}--- }

\newcounter{paragrafsub}[subsection]
\renewcommand{\theparagrafsub}{\thesubsection.\arabic{paragrafsub}}
\newcommand{\parag}{%
\ifcase\value{paragrafsub} {}\else \vspace{0.2cm}\fi
\refstepcounter{paragrafsub}
{\bf \theparagrafsub}\hspace{0.2em}--- }

\newcounter{paragraf}[section]
\renewcommand{\theparagraf}{\thesection.\arabic{paragraf}}
\newcommand{\paragr}{%
\ifcase\value{paragraf} {}\else \vspace{0.2cm}\fi
\refstepcounter{paragraf}
{\bf \theparagraf}\hspace{0.2em}--- }

\newcommand\paragraphe{%
\ifcase\value{subsection} %
  \paragraf 
\else 
\ifcase\value{subsubsection}\paragrafsub %
 \else\paragrafsubsub\fi 
\fi}

\newtheorem{prop}{Proposition}[section]
\newtheorem{lemma}{Lemma}[section]
\newtheorem{coro}{Corollary}[section]
\newenvironment{proof}{\noindent {\sc Proof.}}{\hfill$\square$}
\newtheorem{theointro}{Theorem}

\newenvironment{definition}{\noindent {\sc Definition.}}{}

\newarrow{Dble}=====
\newarrow{Tple}33333

\def\PP{\mathbb{P}}\def\QQ{\mathbb{Q}}
\def\FF{\mathbb{F}}

\def\Chi{{\cal X}}
\def\rk{{\rm rk}}
\def\GB{{\mathcal B}}
\def\GP{{\mathcal P}}
\def\a{\alpha}

\def\O{{\mathcal{O}}}

\def\longto{\longrightarrow}

\def\dim{{\rm dim}}

\def\longto{\longrightarrow}

\def\TH{{T(H)}}
\def\lg{{\mathfrak{g}}}
\def\lh{{\mathfrak{h}}}
\def\lsl{{\mathfrak{sl}}}
\def\lsp{{\mathfrak{sp}}}
\def\lso{{\mathfrak{so}}}

\def\SO{{\rm SO}}
\def\PSO{{\rm PSO}}
\def\PSL{{\rm PSL}}
\def\PSp{{\rm PSp}}

\def\rhobar{\rho_\phi}
\def\rhodelta{\rho_\Delta}
\def\dyn{\mathcal{D}}
\def\KK{\mathbb{K}}

\def\GB{{\cal B}}
\def\H{{\bf H}}
\def\TT{{\mathbb T}}

\psset{nodesep=0,offset=0,fillcolor=black,unit=0.7cm,labelsep=0.4}
\def\supdiag{
\begin{pspicture}(0,-0.25)(0.4,0.25)
    \psline(0.4,-0.25)(0,0)(0.4,0.25)
\end{pspicture}
}

\begin{document}
\begin{center}
  {\large\bf
Spherical homogeneous spaces of minimal rank}\\
\vspace{3mm}
N. Ressayre\\
\end{center}

\noindent
{\bf Abstract.}
Let $G$ be a complex connected reductive algebraic group
and $G/B$ denote the flag variety of $G$.  
A $G$-homogeneous space $G/H$ is said to be {\it spherical}  if $H$ has a 
finite number of orbits in $G/B$.
A class of spherical homogeneous spaces containing the tori, the complete 
homogeneous spaces and the group $G$ (viewed as a $G\times G$-homogeneous space) 
has particularly nice proterties.
Namely, the pair $(G,H)$ is called a {\it spherical pair of minimal rank} if 
there exists $x$ in $G/B$ such that the orbit $H.x$ of $x$ by $H$ 
is open in $G/B$ and the stabilizer $H_x$ of $x$ in $H$ contains a maximal 
torus of $H$. 
In this article, we study and classify the spherical pairs of minimal rank.

\section{Introduction}
Let $G$ be a complex connected reductive algebraic group.
Let $\GB$ denote the flag variety of $G$.  
Let $H$ be an algebraic subgroup of $G$ which has a finite number of orbits
in $\GB$ ; the subgroup $H$ and the homogeneous space $G/H$ are said 
to be {\it spherical}.

In this article, we study and classify a class of spherical homogeneous spaces
containing the tori, the complete homogeneous spaces and the group $G$ viewed 
as a $G\times G$-homogeneous space.
Namely, the pair $(G,H)$ is called a {\it spherical pair of minimal rank} if 
there exists $x$ in $\GB$ such that the orbit $H.x$ of $x$ by $H$ 
is open in $\GB$ and the stabilizer $H_x$ of $x$ in $H$ contains a maximal 
torus of $H$.
In \cite{Kn:WGH} the rank $\rk(G/H)$ of the homogeneous space $G/H$ is defined.
Moreover, we have $\rk(G/H)\geq \rk(G)-\rk(H)$ (where $\rk(G)$ and
$\rk(H)$ denotes the ranks of the groups $G$ and $H$) with equality if and only
if $(G,H)$ is of minimal rank. This explains the name.
The spherical pairs $(G,H)$ of minimal rank such that $H$ is a symmetric 
subgroup of $G$ firstly appear in \cite{Br:eqvectbun}.
During the redaction of this article the compactifications of 
the spherical homogeneous spaces  of minimal rank was studied in 
\cite{Tchoudjem:rngmin} and \cite{BrionJoshua:rngmin}.\\

Let us state our main result.
Propositions~\ref{prop:redHred}, \ref{prop:redHssc} and
\ref{prop:redHsimple} reduce the classification to the special case 
when $G$ is semi-simple adjoint and $H$ is simple.
Indeed, any spherical pair of minimal rank is obtained from special ones and
toric ones by products, finite covers and parabolic inductions.
Next, we prove 

\begin{theointro}
\label{th:introclass}
  The spherical pairs $(G,H)$ of minimal rank with $G$ semi-simple adjoint
and $H$ simple are:
\begin{enumerate}
\item $G=H$.
\item $H$ is simple and diagonally embedded in $G=H\times H$.
\item $(\PSL_{2n},\PSp_{2n})$ with $n\geq 2$.
\item $(\PSO_{2n},\SO_{2n-1})$ with $n\geq 4$.
\item $(SO_7,G_2)$.
\item $(E_6,F_4)$.
\end{enumerate}
\end{theointro}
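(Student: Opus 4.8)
\bigskip
\noindent\textsc{Proof strategy.}
By Propositions~\ref{prop:redHred}, \ref{prop:redHssc} and~\ref{prop:redHsimple} I may assume $G$ semi-simple adjoint and $H$ simple, and the task is to show the list is exhaustive; entries~(i) and~(ii) are visibly of minimal rank. Fix a maximal torus $T_H$ of $H$ inside a maximal torus $T$ of $G$, and a Borel subgroup $B\supseteq T$ with $H.x$ open in $\GB$ and $T_H\subseteq H_x$, where $x=eB$; write $\Phi$, $\Phi_H$ for the root systems of $(G,T)$ and $(H,T_H)$. Openness of $H.x$ is the identity $\lh+\mathfrak b=\lg$, so the $T_H$-equivariant map $\lh\to\lg/\mathfrak b\cong\bigoplus_{\a\in\Phi^+}\lg_{-\a}$ is onto with kernel $\mathrm{Lie}(H_x)\supseteq\mathfrak t_H$; hence $\dim H_x=\dim H-|\Phi^+|$. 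Thus $T_H\subseteq H_x$ forces the crude bound $|\Phi_H|\geq|\Phi^+|=\tfrac12|\Phi|$, and comparing $T_H$-weights in the surjection gives, for every character $\chi$ of $T_H$, the finer inequality $\#\{\b\in\Phi_H:\ \b|_{\mathfrak t_H}=\chi\}\geq\#\{\a\in\Phi^+:\ \a|_{\mathfrak t_H}=-\chi\}$, which needs to hold only for \emph{some} positive system $\Phi^+$ (the Borel $B$ may vary among those containing $T$). I also note that the common value of $\rk(G/H)$ and $\rk G-\rk H$ is some $r\geq 0$.

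\medskip
\noindent
First I would dispose of the case $G$ non-simple. Write $G=G_1\times\cdots\times G_k$ with $k\geq 2$, each $G_i$ simple adjoint, and let $p_i\colon H\to G_i$ be the projections. Since $H$ is simple, each $p_i$ is trivial or an isogeny onto $G_i$; it cannot be trivial, for then $G_i$ would be a direct factor of the spherical homogeneous space $G/H$, forcing infinitely many $B$-orbits. So $G_i\cong H$ for all $i$, whence $|\Phi|=k|\Phi_H|$ and the crude bound reads $|\Phi_H|\geq\tfrac{k}{2}|\Phi_H|$, i.e. $k=2$; then $\dim H_x=\rk H$, so $H_x^{\,\circ}=T_H$, and comparing the two projections shows that, up to an automorphism of $G$, $H$ is the diagonal of $G_1\times G_2\cong H\times H$. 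This is entry~(ii). From now on $G$ is simple.

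\medskip
\noindent
The core of the classification, and the step I expect to be the main obstacle, is the case $G$ simple. For each simple type of $G$ the crude bound $|\Phi_H|\geq\tfrac12|\Phi|$, combined with the classification of simple subgroups of $G$ (for classical $G$, the faithful self-dual or non-self-dual representations realizing a given simple $H$; for exceptional $G$, Dynkin's tables of semisimple subalgebras), leaves only finitely many candidate inclusions $H\subsetneq G$. Each candidate is then confronted with two tests. First, the weight-multiplicity inequality above, optimised over the positive system: this is what eliminates, for instance, $\SL_{n-1}\subset\SL_n$ (for $n\geq 4$ it passes the crude bound, but one of the roots $e_i-e_n$, $e_n-e_i$ always restricts to a $T_H$-weight absent from $\Phi_H$), and it likewise disposes of embeddings through ``large'' representations. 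Second, the requirement that $G/H$ be spherical \emph{and} satisfy $\rk(G/H)=\rk G-\rk H$: using the known ranks of spherical (in particular symmetric) subgroups of simple groups, this removes the near-misses — e.g. $(\PSL_n,\SO_n)$, $(\SO_{2n+1},\SO_{2n})$, $(G_2,\SL_3)$, $(F_4,\mathrm{Spin}_9)$, and the analogous pairs in $E_7,E_8$ — which are either not spherical, or spherical of rank one with vanishing rank defect. Finally one must keep track of the low-rank exceptional isogenies $\SO_3\!\sim\!\PSL_2$, $\SO_5\!\sim\!\PSp_4$, $\SO_6\!\sim\!\PSL_4$ and of triality in $\SO_8$: these account for the bounds $n\geq 2$ in~(iii) and $n\geq 4$ in~(iv), smaller $n$ being already covered by~(ii) or~(iii). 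Carrying this bookkeeping through $A_\ell,B_\ell,C_\ell,D_\ell,E_6,E_7,E_8,F_4,G_2$ leaves precisely~(i) together with~(iii)--(vi).

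\medskip
\noindent
It remains to verify that each listed pair really is a spherical pair of minimal rank. Entry~(i) is trivial, and for~(ii) two Borel subgroups of $H$ in general position give $H_x=T_H$. The pairs~(iii),~(iv),~(vi) are symmetric, $H=(G^\theta)^{\circ}$ for an involution $\theta$; here one picks a $\theta$-stable maximal torus $T$ that is simultaneously fundamental (so $(T^\theta)^\circ$ is maximal in $H$) and maximally $\theta$-split, together with a Borel $B\supseteq T$ with $\theta(B)$ opposite to $B$. Then $H.eB$ is open in $\GB$, the subgroup $H\cap B$ contains the maximal torus $(T^\theta)^\circ$ of $H$, and $\rk(G/H)=\dim T^{-\theta}=\rk G-\rk H$, the restricted root systems having ranks $n-1$, $1$ and $2$ respectively. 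For~(v) one uses that $G_2$ is the stabiliser in $\SO_7$ of a generic vector of the $7$-dimensional representation, so that $\SO_7/G_2$ is spherical of rank $1=\rk\SO_7-\rk G_2$; inspecting the weights of that representation then shows that a maximal torus of $G_2$ lies in the stabiliser of a suitable complete isotropic flag. This completes the classification. \hfill$\square$
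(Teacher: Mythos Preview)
Your approach is genuinely different from the paper's, and while it could in principle be pushed through, it has a real gap and leans on much heavier external machinery than is needed.

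First the gap. In the non-simple case you assert that each projection $p_i\colon H\to G_i$ is ``trivial or an isogeny onto $G_i$''. Simplicity of $H$ only gives that $\ker p_i$ is finite or all of $H$; when it is finite, $p_i$ is an isogeny onto its image $p_i(H)$, which could a priori be a proper subgroup of $G_i$. Your crude bound then does not immediately yield $k\le 2$, and the identification of $H$ with the diagonal is not justified. (This can be repaired---one checks that each $(G_i,p_i(H))$ is itself spherical of minimal rank and then argues inductively---but it needs to be said.)

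More importantly, your treatment of the simple case is a sketch that outsources the work to Dynkin's tables of semisimple subalgebras together with the known list of spherical subgroups and their ranks. The paper avoids all of this by extracting much more structure from the restriction map. Writing $\rho\colon\Chi(T)\to\Chi(T_H)$, one shows: $\rho$ sends $\phi_\lg$ onto $\phi_\lh$; each fibre $\rhobar^{-1}(\alpha)$ consists of either one root or two \emph{orthogonal} roots (by localizing at the kernel of $\alpha$ and applying the rank-one classification $\lh=\lsl_2$); the partition $\phi_\lh=\phi_\lh^1\sqcup\phi_\lh^2$ into single and double fibres is $W_H$-stable; and simple roots of $\lg$ go to simple roots of $\lh$. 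This produces an involution $\sigma_\lh$ on the Dynkin diagram $\dyn_\lg$ whose quotient is $\dyn_\lh$, and a uniqueness statement: the pair $(\dyn_\lg,\sigma_\lh)$ determines $(G,H)$ up to conjugacy. The classification then reduces to pure Dynkin-diagram combinatorics. One handles rank~$2$ by hand (a short case analysis yielding $A_3/C_2$ and $B_3/G_2$), and for higher rank the $W_H$-invariance of $\phi_\lh^2$ together with the rank-$2$ localizations forces the ``colored'' diagram $\dyn_\lh$ to be one of three shapes, from which $\dyn_\lg$ and hence the list follow immediately.

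What your weight-multiplicity inequality captures is precisely a shadow of this: for the correct Borel it says that $\Phi^+\to\Phi_H$ is injective with image in $\Phi_H$. The paper upgrades this to a global statement about $\Phi_G\to\Phi_H$ with controlled fibres, and that is what makes the classification a short combinatorial exercise rather than an appeal to subalgebra tables.
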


We denote by $\H(\GB)$ the set of the $H$-orbit closures in $\GB$.
If $H=P$ is a Borel subgroup of $G$, the elements of $\H(\GB)$ are 
the famous Schubert varieties.
Most of combinatorial and geometric properties of the Schubert varieties cannot
be generalized to the elements of $\H(G/B)$ if $H$ is only spherical.
However, if $H$ is of minimal rank the elements of $\H(G/B)$ have 
nice properties. Let us give details.

The Weyl group $W$ of $G$ acts transitively on the set of Schubert varieties; 
this action parametrizes these varieties by $W/W_P$.
In general, F.~Knop has defined in \cite{Kn:WGH} an action of $W$ in $\H(\GB)$;
but, it seems to be difficult to deduce a parametrization of 
$\H(\GB)$ from this action.
We show in Proposition~\ref{prop:defequivHGB} that $G/H$ is of minimal
rank if and only if the action of $W$ is transitive on $\H(\GB)$. 
In this case, the  isotropy groups 
are isomorphic to the Weyl group $W_H$ of $H$; and, $W/W_H$ parametrizes
$\H(\GB)$.

The Schubert varieties are normal; but, in general elements of $\H(\GB)$
are not normal (see~\cite{Br:GammaGH} or \cite{Spin:these} for examples).
By a result of Brion, if $G/H$ is of minimal rank, the elements of
$\H(\GB)$ are normal.

In \cite{Kn:WGH}, F.~Knop also defined an action of a monoid $\tilde{W}$ 
(constructed from the generators of $W$) on  $\H(\GB)$.
Moreover, the inclusion defines an order on $\H(\GB)$ which generalizes the
Bruhat order for the Schubert varieties.
The description of the Bruhat order from the action of  $\tilde{W}$  is 
well known as the cancellation lemma. 
In general, no such description of this order is known.
Corollary~\ref{cor:cancel} is a cancellation lemma in the minimal rank case.

The number of Schubert varieties of dimension $d$ equal those of 
the codimension $d$.
In Proposition~\ref{prop:sym} we show such a symmetry property of $\H(\GB)$ 
for any spherical pair $(G,H)$ of minimal rank.\\

Let us explain another important motivation for this work.
Let $T$ be a maximal torus of $G$ and $X$ be a $G$-equivariant embedding
of a spherical homogeneous space $G/H$ of minimal rank.
In Proposition~\ref{prop:embcaract}, 
we show that for all fixed point $x$ of $T$ in $X$, $G.x$ is complete.
This property seems to play a key role in several works about the embeddings
of $G\times G/G$ (see for example, \cite{Tchou}).

In Section~\ref{sec:def}, we study the properties of $\H(\GB)$ and of the
toroidal embeddings of $G/H$ for the spherical pairs $(G,H)$ of minimal rank.
This allows us to give several characterizations of the minimal rank property.
In Section~\ref{sec:red}, we reduce the classification to the case when $G$
and $H$ are semisimple.
In Section~\ref{sec:class}, we classify such pairs by associating to $(G,H)$ 
an involution on the vertices of the Dynkin diagram of $G$.

\section{Equivalent definitions and first properties}
\label{sec:def}

\subsection{Minimal rank and orbits of $H$ in $\GB$}

\parag
Let us fix some general notation. If $X$ is a variety, $\dim(X)$ denotes 
the dimension of $X$.
It $x$ belongs to $X$, $T_xX$ denotes the Zariski-tangent space of $X$ at $x$.
If $\Gamma$ is an algebraic group a {\it $\Gamma$-variety} $X$ 
is a variety endowed  with an algebraic action of $\Gamma$.
Let $\Gamma$ be an affine algebraic group and $X$ be a  $\Gamma$-variety.
For $x$ a point in $X$, we denote by $\Gamma_x$ the isotropy group of $x$ and
by $\Gamma.x$ its orbit. We denote by $X^\Gamma$ the set of fixed 
point of $\Gamma$ in $X$.
We denote by $\Gamma^u$ the unipotent radical of $\Gamma$.

\parag
Let us recall that $G$ is a connected complex reductive group,
$\GB$ its flag variety, $H$ a spherical subgroup of $G$ and 
$\H(\GB)$ the set of the $H$-orbit closures in $\GB$. 
If $V$ belongs to $\H(\GB)$, we denote by $V^\circ$ the unique open 
$H$-orbit in $V$.

We recall the definition of \cite{GammaGH} of a graph $\Gamma(G/H)$
whose vertices are the elements of $\H(\GB)$. The original
construction of $\Gamma(G/H)$ due to M. Brion is equivalent but 
very slightly different (see \cite{Br:GammaGH}).

Consider the set $\Delta$ of conjugacy classes of minimal non solvable
parabolic subgroups of $G$. 
If $\a$ belongs to $\Delta$, we denote by $\GP_\a$ the $G$-homogeneous
space with isotropy $\a$. 
Then, there exists a unique $G$-equivariant map
$\phi_\a\,:\,\GB\longto\GP_\a$ which is a $\PP^1$-bundle. 

Let $V\in\H(\GB)$ and $\a\in\Delta$. 
We assume that the restriction of $\phi_\a$ to $V^\circ$ is finite and 
we denote  its degree by $d(V,\alpha)$. 
Then, $\phi_\a^{-1}(\phi_\a(V))$ is an element denoted $V'$ of $\H(\GB)$; 
in this case, we say that {\it $\a$ raises $V$ to $V'$}.  
One of the three following cases occurs.
\begin{itemize}
\item Type $U$: $H$ has two orbits in $\phi_\a^{-1}(\phi_\a(V^\circ))$
($V^\circ$ and $V'^\circ$) and $d(V,\a)=1$.
\item Type $T$: $H$ has three orbits in $\phi_\a^{-1}(\phi_\a(V^\circ))$
 and $d(V,\a)=1$. 
\item Type $N$: $H$ has two orbits in $\phi_\a^{-1}(\phi_\a(V^\circ))$
($V^\circ$ and $V'^\circ$) and $d(V,\a)=2$.
\end{itemize}

\begin{definition}
Let $\Gamma(G/H)$ be the oriented graph with vertices the elements of
$\H(\GB)$ and edges labeled by $\Delta$, where $V$ is joined to $V'$
by an edge labeled by $\a$ if $\a$ raises $V$ to $V'$. This edge is
simple (resp. double) if $d(V,\a)=1$ (resp. 2). 
Following the above cases, we say that an edge has {\it type $U$,
  $T$ or $N$}.
\end{definition}

\parag
Let us fix a Borel subgroup $B$ of $G$.
Let $Y$ be a $B$-variety.
The {\it character group $\Chi(Y)$} of $Y$ is the set of all
characters of $B$ that arise as weights of eigenvectors of $B$ in the
function field $\KK(Y)$.  Then $\Chi(Y)$ is a free abelian group of
finite rank $\rk(Y)$, {\it the rank of $Y$} (see \cite{Kn:WGH}).   
It is well known that a $B$-orbit $\O$ is isomorphic as a variety to
$\KK^l\times (\KK^*)^r$ where $r=\rk(\O)$ and 
$l=\dim(\O)-\rk(\O)$.

If $V$ belongs to $\H(\GB)$, we set:
$$
V_H=\{gH/H\,:\,g^{-1}B/B\in V\}.
$$ 
Then, $V_H$ is a $B$-orbit closure in $G/H$. Moreover, the map 
$V\longmapsto V_H$ is a bijection from $\H(\GB)$ onto the set
of the  $B$-orbit closures  in $G/H$. The rank of $V_H$ is also denoted 
by $\rk(V)$ and called {\it the rank of V}.

\parag
Let $T$ be a maximal torus of $B$. 
Let $W$ denote the Weyl group of $T$.
Every $\a$ in $\Delta$ has a unique representative $P_\a$ which
contains $B$. 
Moreover, there exists a unique $s_\a$ in $W$ such that $Bs_\a B$ is
dense in $P_\a$; and this $s_\a$ is a simple reflexion of $W$. 
The map, $\Delta\longto W,\,\a\longmapsto s_\a$ is a bijection from
$\Delta$ onto the set of simple reflexions of $W$.

F.~Knop defined in  \cite{Kn:WGH} an action of $W$ on the set $\H(\GB)$ 
by describing  the action of the $s_\a$, for any $\a\in\Delta$:

\begin{itemize}
\item Type $U$: $s_\a$ exchanges the two vertices of an edge of type
  $U$ labeled by $\a$.
\item Type $T$: If $\a$ raises $V_1$ and $V_2$ to $V$, then
  $s_\a V_1=V_2$ and $s_\a V=V$.
\item Type $N$: $s_\a$ fixes the two vertices of a double edge
  labeled by $\a$.
\item $s_\a$ fixes all others vertices of $\Gamma(G/H)$.
\end{itemize}

\parag
We can now characterize the spherical pairs of minimal rank in terms 
of $\H(\GB)$:

\begin{prop}
\label{prop:defequivHGB}
With above notation, the following are equivalent:
\begin{enumerate}
\item \label{ass:defequivHGB0}There exists $x\in\GB$ such that
$H.x$ is open in $\GB$ and $H_x$ contains a maximal torus of $H$.
\item \label{ass:defequivHGB1}$\rk(G)-\rk(H)=\rk(G/H)$.
\item \label{ass:defequivHGB2}
All the elements of $\H(\GB)$ have same rank.
\item \label{ass:defequivHGB3}All the edges in $\Gamma(G/H)$ have type $U$.
\item \label{ass:defequivHGB4}$W$ acts transitively on $\Gamma(G/H)$.
\end{enumerate}

If $(G,H)$ satisfies these properties, we say that $(G,H)$ is of minimal rank.
\end{prop}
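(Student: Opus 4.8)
The plan is to prove the cycle of implications $(i)\Rightarrow(ii)\Rightarrow(iii)\Rightarrow(iv)\Rightarrow(v)\Rightarrow(i)$, using the dictionary between $H$-orbit closures in $\GB$ and $B$-orbit closures in $G/H$ established above, together with Knop's theory of the $W$-action. The key numerical input throughout is that the open $H$-orbit $V^\circ$ corresponding to a vertex $V$ satisfies $\dim V^\circ = \dim\GB$, while $\rk(V)$ is the rank of the corresponding $B$-orbit closure $V_H$ in $G/H$, and for the open orbit itself one has $\rk(G/H)=\rk(\GB_H)$ where $\GB_H=G/H$.

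First I would treat $(i)\Rightarrow(ii)$. Choosing $x$ as in $(i)$, the open $H$-orbit $H.x\cong H/H_x$ has dimension $\dim\GB=\dim G-\dim B$, and $H_x\supseteq$ a maximal torus $T_H$ of $H$. Writing $b_x$ for $\dim B_x$ where $B_x$ is the corresponding isotropy in the $B$-picture, a standard computation (counting dimensions in $G/H$ via the open $B$-orbit, which has the maximal rank $\rk(G/H)$) gives $\rk(G/H)=\rk(B)-\rk(B_{x'})$ for a suitable point; since $B_{x'}$ is, up to conjugacy, built from $H_x\cap(\text{a Borel})$ and $H_x$ contains a maximal torus of $H$, the torus part has rank exactly $\rk(H)$, forcing $\rk(G/H)=\rk(G)-\rk(H)$. (This is the place where one must be careful: the inequality $\rk(G/H)\ge\rk(G)-\rk(H)$ from \cite{Kn:WGH} is always available, so only the reverse inequality needs the hypothesis, and it comes precisely from the torus in $H_x$.)

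Next, $(ii)\Leftrightarrow(iii)$ and $(iii)\Leftrightarrow(iv)$ are essentially formal given Knop's setup. For $(ii)\Rightarrow(iii)$: the rank can only drop along the maps $\phi_\a$, and the closed $B$-orbit (the base point $eH$, or rather the minimal vertex) has rank $\rk(G)-\rk(H)$ by a direct check, while the open vertex has rank $\rk(G/H)$; if these coincide and the poset $\H(\GB)$ is connected by raising edges (which it is, by Knop), all ranks are equal. Conversely equal ranks give $\rk(G/H)=\rk(G)-\rk(H)$ by evaluating at the two extreme vertices. For $(iii)\Leftrightarrow(iv)$: by the trichotomy recalled before the Definition, an edge labeled $\a$ raising $V$ to $V'$ has $\rk(V')=\rk(V)$ in type $U$, $\rk(V')=\rk(V)+1$ in type $T$ (the three-orbit case, where $V$ gains a $\KK^*$-factor), and $\rk(V')=\rk(V)$ but with a degree-$2$ phenomenon in type $N$ — wait, one must check the rank behavior in type $N$ carefully: in fact type $N$ also changes the character group (it is the "$\sqrt{}$" situation), so $\rk$ is not constant unless all edges have type $U$. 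Thus all ranks equal $\iff$ no $T$ or $N$ edges $\iff$ all edges type $U$.

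Then $(iv)\Rightarrow(v)$: if every edge has type $U$, the description of the $s_\a$-action shows $s_\a$ simply transposes the two endpoints of each $\a$-edge and fixes everything else; since the raising edges already connect the graph $\Gamma(G/H)$, the group generated by the $s_\a$ — that is, $W$ — acts transitively. For $(v)\Rightarrow(i)$: transitivity of $W$ together with Knop's formula relating the $W$-action to rank (the $s_\a$ cannot lower rank, and on a $W$-orbit all elements have equal rank) forces all vertices, in particular the open and the closed one, to have the same rank, giving $(ii)$ again; then one recovers a point $x$ as in $(i)$ by taking the open $H$-orbit and observing that equality $\rk(G/H)=\rk(G)-\rk(H)$ means the generic isotropy $H_x$ has a torus of full rank $\rk(H)$. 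The main obstacle I anticipate is the bookkeeping in the type $N$ case for the rank (ensuring type $N$ really does obstruct constancy of $\rk$), and pinning down the precise dimension/rank identity $\rk(V_H)=\rk(B)-\rk(B_{x})$ cleanly enough that the torus-in-$H_x$ hypothesis translates exactly into the rank equality; once that identity is in hand, the rest is a routine traversal of the equivalences.
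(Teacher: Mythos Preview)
Your proposal is correct and follows essentially the same route as the paper: the equivalence of (ii)--(v) rests on exactly the three facts you isolate (rank behaviour along edges, connectedness of $\Gamma(G/H)$, and that closed $H$-orbits have rank $\rk(G)-\rk(H)$), and the paper simply records these from \cite{Br:GammaGH} and says the equivalences are then easy. Two small points: the paper confirms that type~$N$ edges, like type~$T$, satisfy $\rk(V')=\rk(V)+1$, so your anticipated obstacle dissolves; and for (i)$\Leftrightarrow$(ii) the paper just cites \cite[Corollary~3.1]{actionKnop}, whereas your direct torus-counting argument (that $H_x\supseteq T_H$ forces the generic $B$-isotropy in $G/H$ to contain a torus of rank $\rk(H)$, whence $\rk(G/H)=\rk(T)-\rk(H)$) is a valid replacement once written out carefully.
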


\begin{proof}
The equivalence between the two first assertions follows 
from \cite[Corollary 3.1]{actionKnop}.

Let us recall some properties of the graph $\Gamma(G/H)$ 
from \cite{Br:GammaGH}.
If $\alpha$ raises $V$ to $V'$ by an edge of type $U$ (resp. $T$ or $N$)
then $\rk(V')=\rk(V)$ (resp. $\rk(V')=\rk(V)+1$).
Moreover, all $V$ in $\H(\GB)$ is joined to $\GB$ by an increasing path in 
$\Gamma(G/H)$ (property of connectedness).
Finally, the rank of a closed $H$-orbit in $\GB$ equals $\rk(G)-\rk(H)$.

Now, one easily checks the equivalence between 
Assertions~\ref{ass:defequivHGB1}, 
\ref{ass:defequivHGB2}, \ref{ass:defequivHGB3}  
and~\ref{ass:defequivHGB4}.
\end{proof}

\parag
Let $(G,H)$ be a spherical pair of minimal rank.
Then, the elements of $\H(\GB)$ can be parametrized.
Indeed, let $W_0$ be the stabilizer of $\GB$ for the action of $W$.
In \cite{actionKnop}, it is shown that $W_0$ is isomorphic to the Weyl group 
$W_H$ of $H$. Moreover, Proposition~\ref{prop:defequivHGB} shows that 
the Knop's action gives a bijection between $W/W_0$ and $\H(\GB)$.
In particular, we have: $|\H(\GB)|=\frac{|W|}{|W_H|}$, where $|E|$ denotes
the cardinality of the finite set $E$.\\

Each orbit closure $V$ of $H$ in $\GB$ is multiplicity-free in the sense of 
\cite{Br:GammaGH}.
In particular, by \cite[Theorem~5]{Br:GammaGH} $V$ is normal.

\parag
In this paragraph, we are interested in reading the generalized Bruhat order 
off the graph $\Gamma(G/H)$.
Let us start by showing the following nice property of this graph: 

\begin{prop}
\label{prop:min}
There exists a unique  closed orbit of $H$ in $\GB$ and 
it is the only minimal element of $\Gamma(G/H)$.  
\end{prop}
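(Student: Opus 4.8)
The plan is to exhibit a closed orbit, to identify it with the minimal element of $\Gamma(G/H)$, and to prove uniqueness by a geometric argument using that $(G,H)$ is of minimal rank.

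Since $\GB$ is complete and $H$ has only finitely many orbits, an $H$-orbit of minimal dimension is closed, so a closed $H$-orbit exists. Recall that $\GB$, the closure of the open orbit, is the unique maximal element of $\H(\GB)$ for the inclusion order, and that a vertex $V$ of $\Gamma(G/H)$ is a single closed orbit exactly when $V=V^\circ$, which is exactly when $V$ is a minimal element of $\H(\GB)$; hence the minimal vertices of $\Gamma(G/H)$ are precisely the closed $H$-orbits, and the assertion reduces to the existence of a unique minimal vertex. It is worth recording, from the description of Knop's operators in the minimal rank case (all edges of type $U$), that for each $\a\in\Delta$ and each $V$ there is at most one $\a$-edge at $V$, and that $s_\a V\supsetneq V$ if $\a$ raises $V$, $s_\a V\subsetneq V$ if some vertex is raised to $V$ by $\a$, and $s_\a V=V$ otherwise; so $V$ is minimal if and only if $s_\a V\not\subsetneq V$ for every $\a\in\Delta$, dually, $\GB$ is the unique vertex with $s_\a\GB\subseteq\GB$ for every $\a$. (This remark alone does not yield uniqueness, since the $W$-action does not preserve the order.)

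For uniqueness I would argue geometrically. Fix a maximal torus $T_H$ of $H$; by the minimal rank hypothesis (Proposition~\ref{prop:defequivHGB}) there is, after conjugation, a Borel subgroup $B\supseteq T_H$ of $G$, with maximal torus $T\supseteq T_H$, such that the open $H$-orbit equals $H.(B/B)$. Let $V$ be any closed $H$-orbit. Being complete and stable under $T_H$, it contains a $T_H$-fixed point, hence a point $y$ with $G_y$ a Borel containing $T_H$ and $H\cap G_y$ parabolic in $H$; the point now is to use the equality $\rk(G)-\rk(H)=\rk(G/H)$ to show that one may take $y$ to be a $T$-fixed point $wB/B$, that then $H\cap wBw^{-1}$ is forced to be a Borel subgroup of $H$ (so $V\cong H/B_H$), and finally that the set $\{\,wB/B: H\cap wBw^{-1}\ \text{is a Borel of}\ H\,\}$ is a single $H$-orbit --- for instance by relating it, again through the minimal rank condition, to the stabiliser $H\cap B$ of the open orbit. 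Together these give that any two closed $H$-orbits coincide. The step I expect to be the main obstacle is precisely this last one: extracting from minimal rank that the closed orbit is of the maximal type $H/B_H$ and is unique up to $H$-conjugacy; once this is granted, the remaining implications are formal from the properties of $\Gamma(G/H)$ recalled above.
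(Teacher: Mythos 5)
Your reduction of the statement to the uniqueness of the closed $H$-orbit is fine, but the uniqueness itself --- which is the entire content of the proposition --- is never established: you describe a geometric plan and explicitly leave its decisive step open, namely that the set of $T$-fixed points $wB/B$ with $H\cap wBw^{-1}$ a Borel subgroup of $H$ forms a single $H$-orbit. Nothing in your sketch indicates how the minimal rank hypothesis would force this, and it must enter somewhere: for $G=\SL_2$ and $H=T$ a maximal torus (a spherical pair which is \emph{not} of minimal rank) there are two closed orbits, and both are of the form $H/B_H$, since for any connected spherical $H$ a closed orbit $H.x\subset\GB$ automatically has $H_x$ parabolic (completeness) and solvable ($H_x=H\cap G_x$ with $G_x$ a Borel), hence a Borel of $H$. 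So the observation that closed orbits are full flag varieties of $H$ carries no information about their number, and the proposal has a genuine gap at its central point. (A smaller remark: identifying the closed orbits with the minimal vertices of the oriented graph, rather than with the minimal elements for inclusion, already requires knowing that every non-closed orbit closure admits an incoming edge; this is true but is part of what the connectedness property of $\Gamma(G/H)$ gives you, not a tautology.)

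For comparison, the paper's proof is purely combinatorial and runs entirely on $\Gamma(G/H)$: letting $\H_0$ be the set of vertices reachable from a fixed closed orbit $V_0$ by an oriented path, one assumes $\H_0\neq\H(\GB)$, chooses $Z\notin\H_0$ of minimal dimension that is raised by some $\a$ to an element $Z'$ of $\H_0$, takes the last edge $\beta\neq\a$ of a path from $V_0$ to $Z'$, and applies Brion's lemma on the subgraph generated by two distinct minimal parabolics --- using crucially that in the minimal rank case all edges have type $U$ --- to produce a $Z$ of strictly smaller dimension with the same property, a contradiction. If you want to keep a geometric proof you would need an actual argument for the single-orbit claim (for instance via the transitivity of the $W$-action of Proposition~\ref{prop:defequivHGB}); as written, the combinatorial route is the one that closes the argument.
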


\begin{proof}
  Let $V_0$ be a closed orbit of $H$ in $\GB$.
Let $\H_0$ be the set of the $H$-orbit closures in $\GB$ 
linked with $V_0$ by an oriented  path in $\Gamma(G/H)$. 
It is sufficient to prove that  $\H_0=\H(\GB)$.

We assume that $\H(\GB)-\H_0$ is not empty.
Since $\GB$ belongs to  $\H_0$ and all the orbits 
are joined to $\GB$ by an oriented path, there exists
$Z\in \H(\GB)-\H_0$ and $\alpha\in\Delta$ such that 
$\alpha$ raises $Z$ to an element $Z'$ of $\H_0$ 
(it is sufficient to take $Z$ of maximal dimension in $\H(\GB)-\H_0$).
Let us fix such a pair $(Z,\alpha)$ such that $Z$ is of minimal dimension.

Since $Z'\neq V_0$, there exists $\beta\in\Delta$ and 
$Y\in \H_0$ such that $\beta$ raises $Y$ to $Z'$.
Since the edges of $\Gamma(G/H)$ are of type $U$ and
$Y\neq Z$, we have $\beta\neq\alpha$.

Using \cite[Lemma 3]{Br:GammaGH}, one easily checks that one of the two 
following graphs is a subgraph of $\Gamma(G/H)$:

\begin{minipage}{0.4\textwidth}
  \begin{center}
\begin{diagram}[width=0.7cm,height=0.7cm]
  &             &Z' \\
  &\ldLine^\beta& &\rdLine^\alpha\\
Y &             & &              &Z\\
\dLine^\alpha&  & &              &\dLine_\beta\\
V &             & &              &V'\\
  &\rdLine_\beta& &\ruLine_\alpha\\
  &             &V''
  \end{diagram} 
  \end{center}
\end{minipage}
 \begin{minipage}{0.4\textwidth}
  \begin{center}
  \begin{diagram}[width=0.7cm,height=0.7cm]
  &             &Z'\\
  &\ruLine^\beta& &\rdLine^\alpha\\
 Y&             & &              &Z\\
  &\rdLine_\alpha& &\ruLine_\beta\\
  &             &V            
  \end{diagram}\end{center}\end{minipage}\\

In the first case, $Z$, $V'$ and $V''$ does not belong to $\H_0$.
By minimality of the dimension of $Z$ and by considering $(V'',\beta)$ 
we deduce that $V$ does not belong to $\H_0$.
Now, the pair $(V,\alpha)$ contradicts the minimality of the dimension of $Z$.
A similar argue works in the second case.
\end{proof}\\

Let $V$ in $\H(\GB)$ and $V_0$ denote the unique closed $H$-orbit $\GB$.
By Proposition~\ref{prop:min}, there exists an increasing path in 
$\Gamma(G/H)$ from $V_0$ to $V$. Let $(\a_1,\cdots,\alpha_s)$ be the sequence 
of the labels of the edges  of such a path.
Notice that $s=\dim(V)-\dim(V_0)$.
The inclusion relation $\H(\GB)$ can be read off the graph $\Gamma(G/H)$ 
by the following cancellation corollary:

\begin{coro}
\label{cor:cancel}
  We use above notation and fix $V'$ in $\H(\GB)$.
Set $k=\dim(V')-\dim(V_0)$.
Then, $V'\subset V$ if and only if there exists $i_1<\cdots<i_k$ 
such that the increasing path starting from $V_0$ and with labels
$(\a_{i_1},\cdots, \a_{i_k})$ ends at $V'$.
\end{coro}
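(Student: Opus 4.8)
The plan is to mimic the proof of the classical cancellation lemma for Schubert varieties, replacing the Bruhat decomposition arguments with the combinatorial structure of $\Gamma(G/H)$ established above. The key point making this possible is that, by Proposition~\ref{prop:defequivHGB}, all edges of $\Gamma(G/H)$ have type $U$, so raising an element always preserves rank and changes dimension by exactly $1$; hence increasing paths in $\Gamma(G/H)$ correspond exactly to chains in the inclusion order, with length equal to the difference of dimensions.

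First I would prove the ``if'' direction, which is easy: if such a subsequence $i_1 < \cdots < i_k$ exists, then the path with labels $(\a_{i_1}, \dots, \a_{i_k})$ is an increasing path from $V_0$ to $V'$, so each vertex is contained in the next and $V' \subset V$ follows once we know the terminal vertex of the full path $(\a_1, \dots, \a_s)$ is $V$ and that passing through a sub-path still lands inside $V$; more precisely one argues by induction on $s$, peeling off the last label, using that an edge $Y \to Y'$ of type $U$ labeled $\a$ means $Y' = \phi_\a^{-1}(\phi_\a(Y))$ so $Y \subset Y'$, and that $\phi_\a^{-1}(\phi_\a(\cdot))$ is monotone for inclusion.

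For the ``only if'' direction I would argue by induction on $s = \dim(V) - \dim(V_0)$, the case $s = 0$ being trivial. Suppose $V' \subsetneq V$ and let $\a = \a_s$ be the last label, so that $\a$ raises some $U \in \H(\GB)$ (the penultimate vertex of the chosen path, with $\dim U = \dim V - 1$) to $V$. The dichotomy I expect to invoke is whether $V' \subset U$ or not. If $V' \subset U$, apply the induction hypothesis to the path $(\a_1, \dots, \a_{s-1})$ from $V_0$ to $U$ to extract $i_1 < \cdots < i_k$ in $\{1, \dots, s-1\}$ doing the job. If $V' \not\subset U$, I would use \cite[Lemma~3]{Br:GammaGH} (the same local-structure lemma used in the proof of Proposition~\ref{prop:min}) together with the fact that all edges are type $U$ to show that $\a$ raises $V'' := \phi_\a^{-1}(\phi_\a(V'))$ to $V'$ with $V'' \subset U$, $V'' \neq V'$, and $\dim V'' = \dim V' - 1$; then apply induction to $V'' \subset U$ to get indices $i_1 < \cdots < i_{k-1}$ in $\{1,\dots,s-1\}$, and append $i_k = s$ since $\a_{i_k} = \a_s = \a$ raises $V''$ to $V'$.

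The main obstacle will be the second case of the induction step: controlling what happens when $V' \not\subset U$. One must rule out that $\phi_\a$ restricted to $V'^\circ$ fails to be finite, and identify precisely the element $V''$ obtained by descending along $\a$; this is exactly the kind of local analysis of $\Gamma(G/H)$ near a pair of edges sharing a label that \cite[Lemma~3]{Br:GammaGH} is designed for, and the same case-by-case diagram argument as in Proposition~\ref{prop:min} should close it, using crucially that the only edge type present is $U$ so that $\phi_\a^{-1}(\phi_\a(V'))$ is genuinely a single $H$-orbit closure one dimension up and $V' = \phi_\a^{-1}(\phi_\a(V'))$ would force $V'$ to already be $\phi_\a$-stable, contradicting $V' \not\subset U$ when combined with $V' \subset V = \phi_\a^{-1}(\phi_\a(U))$.
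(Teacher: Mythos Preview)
Your overall strategy --- induction on $s$, splitting into the cases $V'\subset U$ and $V'\not\subset U$, and in the second case descending along $\alpha_s$ --- is exactly the Richardson--Springer argument the paper invokes (the paper's proof is the single sentence ``Using Proposition~\ref{prop:min}, the proof of \cite{RiSp1} works here'').

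However, your handling of the case $V'\not\subset U$ has a concrete error. You set $V'' := \phi_\alpha^{-1}(\phi_\alpha(V'))$ and claim $\alpha$ raises $V''$ to $V'$, but $\phi_\alpha^{-1}(\phi_\alpha(V'))\supseteq V'$ always, so this $V''$ cannot lie strictly below $V'$. The element you need is the unique $V''$ with $\phi_\alpha^{-1}(\phi_\alpha(V''))=V'$, i.e.\ the closure of the closed $H$-orbit in $\phi_\alpha^{-1}(\phi_\alpha({V'}^{\circ}))$; for such a $V''$ to exist $V'$ must be $P_\alpha$-saturated, that is $\phi_\alpha^{-1}(\phi_\alpha(V'))=V'$. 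Hence the situation to \emph{establish}, not to rule out, is that $\phi_\alpha|_{{V'}^{\circ}}$ fails to be finite --- the opposite of what your last paragraph asserts. That $V'\subset P_\alpha U$ and $V'\not\subset U$ force $V'$ to be $P_\alpha$-saturated, and that the resulting $V''$ then lies in $U$, is precisely the lifting property (Property~Z) proved in \cite{RiSp1}; it is this, not \cite[Lemma~3]{Br:GammaGH} (which treats diamonds with two \emph{distinct} labels $\alpha\neq\beta$), that carries the weight of the argument.
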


\begin{proof}
  Using Proposition~\ref{prop:min}, the proof of \cite{RiSp1} works here.
\end{proof}

\parag
Let $d_G$ (resp. $d_H$) denote the dimension of the complete flag variety 
of $G$ (resp. $H$).
Then, we have the following ``symmetry'' on the set $\H(\GB)$:

\begin{prop}
\label{prop:sym}
Here we assume that $H$ is connected.
  For all $0\leq \delta\leq d_G-d_H$, the number of elements in 
$\H(\GB)$ of dimension $d_G-\delta$ equal those of dimension $d_H+\delta$.
\end{prop}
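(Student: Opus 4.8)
The plan is to deduce the symmetry from a closed formula for the generating polynomial
$$P(q):=\sum_{V\in\H(\GB)}q^{\dim V-d_H}.$$
Since $\GB$ is the only element of $\H(\GB)$ of dimension $d_G$, and by Proposition~\ref{prop:min} there is a unique closed $H$-orbit --- of dimension $d_H$, its stabilizer being a Borel subgroup of $H$ (see below), with all other $H$-orbits of strictly larger dimension --- $P$ is a polynomial of degree $d_G-d_H$, and the proposition says exactly that $P$ is palindromic: $P(q)=q^{\,d_G-d_H}P(1/q)$. Throughout I would work in the Grothendieck ring $K_0(\mathrm{Var})$: write $[Y]$ for the class of a variety, put $\mathbb{L}:=[\AA^1]$, and use additivity of $[\,\cdot\,]$ over finite locally closed stratifications together with its multiplicativity along Zariski locally trivial fibre bundles. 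By their affine cell pavings, $[\GB]$ and $[H/B_H]$ (for $B_H$ a Borel subgroup of $H$) are honest polynomials in $\mathbb{L}$, and by Poincar\'e duality they are palindromic of degrees $d_G$ and $d_H$ respectively.

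After conjugating $B$ I may assume that the closed $H$-orbit in $\GB$ is $HB/B$; its stabilizer $B_H:=H\cap B$ is then parabolic in $H$ (the orbit is complete), solvable (it lies in $B$), hence --- since $H$ is connected --- a Borel subgroup of $H$, so $\dim B_H=\dim H-d_H$. I would compute the class $[G/B_H]$ in two ways. The projection $G/B_H\to\GB$ is a locally trivial bundle with fibre $B/B_H$; from $[B]=\mathbb{L}^{d_G}(\mathbb{L}-1)^{\rk G}$ and $[B_H]=\mathbb{L}^{\dim B_H-\rk H}(\mathbb{L}-1)^{\rk H}$ one gets $[B/B_H]=\mathbb{L}^{\,a}(\mathbb{L}-1)^{\rho}$, where $\rho:=\rk(G)-\rk(H)$ and $a:=d_G-\dim B_H+\rk H$, so that $[G/B_H]=[\GB]\cdot[B/B_H]$. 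The projection $G/B_H\to G/H$ is a locally trivial bundle with fibre $H/B_H$, so $[G/B_H]=[G/H]\cdot[H/B_H]$. Separately, by Proposition~\ref{prop:defequivHGB} every $B$-orbit $\Omega$ in $G/H$ has rank $\rho$, whence $\Omega\cong\KK^{\,l}\times(\KK^{*})^{\rho}$ with $l=\dim\Omega-\rho$; summing over the finitely many $B$-orbits, using the bijection $V\mapsto V_H$ between $\H(\GB)$ and the $B$-orbit closures in $G/H$ together with $\dim V_H=\dim V+\dim B-\dim H$, gives $[G/H]=(\mathbb{L}-1)^{\rho}\,\mathbb{L}^{\,b}\,P(\mathbb{L})$, with $b:=\dim B-\dim H-\rho+d_H$.

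Equating the two expressions for $[G/B_H]$ and substituting, the factors $(\mathbb{L}-1)^{\rho}$ cancel and the residual power of $\mathbb{L}$ is $a-b$; a one-line computation, in which the identity $\dim B_H=\dim H-d_H$ is used, gives $a-b=0$. Hence
$$P(q)=\frac{[\GB](q)}{[H/B_H](q)}.$$
Numerator and denominator are palindromic of degrees $d_G$ and $d_H$, so the quotient $P$, being a polynomial, is palindromic of degree $d_G-d_H$: indeed if $A(q)=q^{\deg A}A(1/q)$, $B(q)=q^{\deg B}B(1/q)$ and $A/B$ is a polynomial, then $(A/B)(q)=q^{\deg A-\deg B}(A/B)(1/q)$. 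Thus $P(q)=q^{\,d_G-d_H}P(1/q)$, i.e.\ for $0\le\delta\le d_G-d_H$ the coefficients of $q^{\delta}$ and of $q^{\,d_G-d_H-\delta}$ in $P$ coincide, which is precisely $\#\{V\in\H(\GB):\dim V=d_H+\delta\}=\#\{V\in\H(\GB):\dim V=d_G-\delta\}$.

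The argument is essentially formal once set up; the only real work is the computation of $[B/B_H]$ and the bookkeeping that yields $a-b=0$. The single place where the minimal rank hypothesis is indispensable is the uniformity of the rank of the $B$-orbits in $G/H$ (Proposition~\ref{prop:defequivHGB}): without it, $[G/H]$ is not of the form $(\mathbb{L}-1)^{\rho}\mathbb{L}^{b}P(\mathbb{L})$, and the clean identity $P=[\GB]/[H/B_H]$ --- hence the palindromy --- breaks down. (For $G$ and $H$ reductive this identity reads $P(q)=\prod_i[d_i^G]_q/\prod_j[d_j^H]_q$ in terms of the fundamental degrees, in particular forcing $\prod_j[d_j^H]_q$ to divide $\prod_i[d_i^G]_q$.)
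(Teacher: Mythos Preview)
Your argument is correct and lands on the same identity the paper proves, $P(q)\cdot[H/B_H](q)=[\GB](q)$ (the paper writes it as $Q(t)P_H(t)=P_G(t)$), from which the palindromic symmetry is immediate. The routes differ, however. The paper counts $\FF_q$-points and decomposes $\GB$ directly into $H$-orbits: each stabilizer $H_x$ is solvable and (by minimal rank) contains a maximal torus of $H$, hence lies in a Borel of $H$, so $H.x\to\GB_H$ is a fibration with affine-space fibres and $|\GB(\FF_q)|=P_H(q)\,Q(q)$ follows in one stroke. You instead route through the auxiliary variety $G/B_H$, compare its class under the two projections to $\GB$ and to $G/H$, and decompose $G/H$ rather than $\GB$ into orbits. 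The paper's path is shorter and uses minimal rank in the guise ``$H_x$ contains a maximal torus of $H$''; yours uses it as ``every $B$-orbit in $G/H$ has rank $\rho$'' and yields as a by-product the divisibility $\prod_j[d_j^H]_q\mid\prod_i[d_i^G]_q$.

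One step in your $K_0(\mathrm{Var})$ formulation needs a word of justification. The identity $[G/B_H]=[G/H]\cdot[H/B_H]$ requires $G/B_H\to G/H$ to be \emph{Zariski} (at least piecewise) locally trivial, and since $H$ need not be special this does not come for free from \'etale local triviality. It does hold here, because over each $B$-orbit $B/(B\cap gHg^{-1})$ in $G/H$ the structure group $B\cap gHg^{-1}$ is, thanks to minimal rank, connected solvable and hence special --- but you should say so. The paper sidesteps the issue entirely by working over $\FF_q$, where Lang's theorem trivializes every torsor under a connected group. A similar remark applies to obtaining $[B/B_H]$ by ``dividing'' $[B]$ by $[B_H]$ and to cancelling powers of $\mathbb{L}$, which is a zero-divisor in $K_0(\mathrm{Var})$: cleaner is to note that $B/B_H=B\cdot eH$ is itself one of your $B$-orbits in $G/H$, hence $\cong\KK^{\dim B-\dim B_H-\rho}\times(\KK^*)^{\rho}$ directly, after which all classes in play lie in the sub-polynomial-ring $\mathbb{Z}[\mathbb{L}]$ where cancellation is legitimate.
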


\begin{proof}
Consider $P_G(t)$ and $P_H(t)$ the Poincar\'e  polynomials of 
the complete flag varieties of $G$ and $H$.
By Poincar\'e duality, they are symmetric polynomials of degrees 
$d_G$ and $d_H$ ; that is,
$t^{d_G}P_G(1/t)=P_G(t)$ and $t^{d_H}P_H(1/t)=P_H(t)$.

Consider the following polynomial:
$$
Q(t)=\sum_{V\in{\H}(\GB)}t^{\dim(V)-d_H}.
$$
We claim that $Q(t).P_H(t)=P_G(t)$. 
The claim implies  that $Q(t)$ is symmetric and so the proposition. 

Let $\GB_H$ denote the flag variety of $H$.
For any $x\in\GB$, $H_x$ is a solvable subgroup of $H$ containing a maximal 
torus of $H$. It follows that $H_x$ is contained in a Borel subgroup of $H$:
consider $\varphi_x\::\:H.x\longto \GB_H$ the map induced  by this inclusion.
Moreover, the fiber $\varphi_x^{-1}(\varphi_x(x))$ is isomorphic to an affine 
space of dimension $\dim(H.x)-d_H$.

We choose one point in each orbit of $H$ in $\GB$ and consider the associated 
morphisms $\varphi_x$. 
There exists a finitely generated extension $K$ of $\QQ$ such that 
$G$, $H$, the inclusion of $H$ in $G$, the chosen points in $\GB$, the 
morphisms $\varphi_x$, the isomorphisms between the fibers the $\varphi_x$ and 
affine spaces are all defined. 
By taking an extension if necessary, we may (and shall) also assume that
the Schubert cells (for fixed Borel subgroups of $G$ and $H$) of $\GB$ and 
$\GB_H$ are defined and isomorphic to affine spaces over $K$.

Now, we consider a finite quotient $\FF_q$ of $K$ and the 
points $\GB(\FF_{q^n})$ of $\GB$ over $\FF_{q^n}$ for all positive integer $n$.
By using the decompositions of $\GB$ and $\GB_H$ in Schubert cells, one 
obtain:
$$
|\GB(\FF_{q^n})|=P_G(q^n)\ \ \ {\rm and}\ \ \ 
|\GB_H(\FF_{q^n})|=P_H(q^n).
$$ 
Now, we count the points in $\GB(\FF_{q^n})$ by using the decomposition 
in $H$-orbits:
$$
|\GB(\FF_{q^n})|=\sum_{V\in\H(\GB)}|V^\circ(\FF_{q^n})|=
\sum_{V\in\H(\GB)}|\GB_H(\FF_{q^n})|. (q^n)^{\dim(V)-d_H}=
P_H(q^n).Q(q^n).
$$
The claim follows.
\end{proof}

\subsection{Minimal rank and toroidal embeddings}

\parag
In this subsection, $(G,H)$ is a spherical pair not necessarily of minimal rank.
An {\it embedding of $G/H$} is a pair $(X,x)$ where $X$ is a normal 
and irreducible $G$-variety and $x$ is a point of $X$ such that 
$G.x$ is open in $X$ and $G_x=H$. 
Such an embedding is said to be {\it toroidal} if any irreducible 
$B$-stable divisor of $X$ which contains a $G$-orbit is $G$-stable.

\begin{lemma}
\label{lem:rkisot}
Let $G/H$ be a spherical homogeneous space (not necessarily of minimal
rank). Let $(X,x)$ be a toroidal embedding of $G/H$ and $y$ be a point in $X$.

Then, we have the inequality:
$$
\rk(G/H)+\rk(H)\geq \rk(G.y)+\rk(G_y).
$$  
In particular, if $G/H$ is of minimal rank, $G.y$ is.
\end{lemma}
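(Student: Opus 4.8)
The plan is to split the desired inequality into the two relations $\rk(G.y)=\rk(G/H)-d$ and $\rk(G_y)\le\rk(H)+d$, where $d:=\dim X-\dim(G.y)$, and then add them. The first is pure Luna--Vust theory: in a toroidal embedding of $G/H$ the $G$-orbits of codimension $d$ all have rank $\rk(G/H)-d$ (see \cite{Kn:WGH}). Granting the second relation, the last assertion of the lemma follows: $G.y\cong G/G_y$ is again a spherical homogeneous space, so $\rk(G.y)\ge\rk(G)-\rk(G_y)$ \cite{Kn:WGH}; combining this with the inequality of the lemma and the hypothesis $\rk(G/H)=\rk(G)-\rk(H)$ gives $\rk(G.y)=\rk(G)-\rk(G_y)$, i.e. $G.y$ is of minimal rank by Proposition~\ref{prop:defequivHGB}.

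So the real point is to prove $\rk(G_y)\le\rk(H)+d$, and for this I would degenerate $y$ onto the open orbit. By the local structure theorem for toroidal embeddings \cite{Kn:WGH}, after conjugating $y$ we may assume it lies in the $B$-stable affine open set $X_B\cong P^u\times S$ obtained by removing the colours of $X$, where $P\supseteq B$ is a parabolic with Levi $L$ and $S$ is a normal affine toric variety under a quotient torus $A$ of $L$, the open orbit meeting $S$ in the open $A$-orbit. In the affine toric variety $S$ every point is a limit, along some one-parameter subgroup of $A$, of a point of the open $A$-orbit; carrying this over via $X_B\cong P^u\times S$ and lifting the one-parameter subgroup of $A$ to one of $T\subseteq L$, I obtain a one-parameter subgroup $\gamma$ of $T$ and a point $x'$ of $G/H$ with $y=\lim_{t\to 0}\gamma(t)\cdot x'$. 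Hence $\gamma(t)\cdot y=y$ for all $t$, so $\gamma(\KK^*)\subseteq G_y$; and the limit $\mathfrak l:=\lim_{t\to 0}{\rm Ad}(\gamma(t))\,\mathfrak g_{x'}$, taken in the (projective) Grassmannian of $(\dim H)$-planes of $\mathfrak g$, exists, is a subalgebra stable under ${\rm Ad}(\gamma)$, and is contained in $\mathfrak g_y$ by continuity of the infinitesimal action. Since $\dim\mathfrak l=\dim\mathfrak g_{x'}=\dim H$, we get $\dim\mathfrak g_y-\dim\mathfrak l=\dim G_y-\dim H=d$.

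To conclude I bound $\rk(G_y)$ from the pair $(\mathfrak l,\mathfrak g_y)$. Decompose $\mathfrak g=\bigoplus_n\mathfrak g(n)$ into ${\rm Ad}(\gamma)$-weight spaces; then $\mathfrak g(0)={\rm Lie}(Z_G(\gamma))$ and $\mathfrak p:=\bigoplus_{n\ge 0}\mathfrak g(n)={\rm Lie}(P(\gamma))$ for the associated parabolic $P(\gamma)$, and the projection $\pi\colon\mathfrak p\to\mathfrak g(0)$ along $\bigoplus_{n>0}\mathfrak g(n)$ is the differential of $P(\gamma)\to Z_G(\gamma)$. The ``leading term'' description of the limit subalgebra gives that its weight-zero part is $\mathfrak l\cap\mathfrak g(0)=\pi(\mathfrak g_{x'}\cap\mathfrak p)$, which is the Lie algebra of the image $K$ of $(G_{x'}\cap P(\gamma))^\circ$ under $P(\gamma)\to Z_G(\gamma)$; since $K$ is a homomorphic image of a subgroup of $G_{x'}$, which is conjugate to $H$, we get $\rk(K)\le\rk(H)$. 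Now choose a maximal torus $T_y$ of $G_y$ containing $\gamma(\KK^*)$: then ${\rm Ad}(\gamma)$ is trivial on ${\rm Lie}(T_y)$, so ${\rm Lie}(T_y)\subseteq\mathfrak g(0)$ and $\mathfrak c:={\rm Lie}(T_y)\cap\mathfrak l\subseteq\mathfrak l\cap\mathfrak g(0)={\rm Lie}(K)$, whence $\dim\mathfrak c\le\rk(K)\le\rk(H)$ as $\mathfrak c$ is a toral subalgebra of ${\rm Lie}(K)$. On the other hand $\dim{\rm Lie}(T_y)-\dim\mathfrak c=\dim({\rm Lie}(T_y)+\mathfrak l)-\dim\mathfrak l\le\dim\mathfrak g_y-\dim\mathfrak l=d$. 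Therefore $\rk(G_y)=\dim{\rm Lie}(T_y)\le\dim\mathfrak c+d\le\rk(H)+d$.

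The step I expect to be the main obstacle is precisely this last one: making rigorous that moving from $G/H$ to the smaller orbit $G.y$ raises the rank of the isotropy group by at most the codimension $d$. It rests on (i) a semicontinuity statement — that the ${\rm Ad}(\gamma)$-degeneration of $\mathfrak g_{x'}$ is ``controlled in weight zero'' by $\mathfrak g_{x'}$ itself, which is what yields $\dim\mathfrak c\le\rk(H)$ — and (ii) the elementary inequality $\dim{\rm Lie}(T_y)\le\dim({\rm Lie}(T_y)\cap\mathfrak l)+(\dim\mathfrak g_y-\dim\mathfrak l)$ together with the maximality of $T_y$ among tori of $G_y$. The rest — the local structure theorem, the orbit-rank formula, and Knop's inequality — is quotable.
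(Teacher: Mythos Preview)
Your argument is correct and takes a genuinely different route from the paper. The paper first reduces to the codimension-one case by chaining through a flag of orbits $G.y=\O_0\subset\overline{\O_1}\subset\cdots\subset X$, then in codimension one looks at the character $\chi\colon G_y\to\KK^*$ given by the normal line $T_yX/T_y\O$, sets $S=(\ker\chi|_{T_y})^\circ$, and uses a Bialynicki--Birula fixed-point argument to show $S$ fixes a point of $G.x$; this gives $\rk(H)\ge\dim S=\rk(G_y)-1$, which together with $\rk(G/H)=\rk(\O)+1$ yields the inequality. You instead attack arbitrary codimension $d$ directly: after quoting $\rk(G.y)=\rk(G/H)-d$, you degenerate $\mathfrak g_{x'}$ along a one-parameter subgroup $\gamma$ to a graded limit $\mathfrak l\subseteq\mathfrak g_y$ and read off $\rk(G_y)\le\rk(H)+d$ from the weight-zero piece $\mathfrak l\cap\mathfrak g(0)=\pi(\mathfrak g_{x'}\cap\mathfrak p)$. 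The paper's approach is shorter and more geometric once the codimension-one reduction is in hand; yours avoids the induction, gives a little more structural information about how $\mathfrak g_y$ is built from $\mathfrak h$, and makes transparent exactly where the extra $d$ in $\rk(G_y)$ can come from. One small point worth tightening in your write-up: the inequality $\dim\mathfrak c\le\rk(K)$ uses that a toral subalgebra of $\mathrm{Lie}(K)$ lies in the Lie algebra of a torus of $K$ (via the center of the centralizer), which is standard in characteristic~$0$ but deserves a word since $K$ need not be reductive.
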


\begin{proof}
Firstly, we prove that it is sufficient to show the lemma when 
$\dim(G.y)=\dim(X)-1$.
By \cite[Lemma 2.1.1]{BL}, since $X$ is toroidal, there exists $G$-orbits
$\O_0,\cdots,\O_s$ such that $G.y=\O_0\subset\overline{\O_1}\subset
\cdots\subset \overline{\O_s}=X $ (where $\overline{\O_i}$ denotes the closure
of $\O_i$ in $X$) and $\dim(\O_{i+1})=\dim(\O_i)+1$ for all $i=0,\cdots,s-1$.
For all $i$ we fix a point $y_i$ in $\O_i$.
Since $\overline{\O_i}$ is normal, we can apply the case when 
$\dim(G.y)=\dim(X)-1$ to each $\O_i\subset\overline{\O_{i+1}}$ showing that:
$$
\rk(G.y_{i+1})+\rk(G_{y_{i+1}})\geq \rk(G.y_i)+\rk(G_{y_i}).
$$
The inequality of the lemma follows.\\

We now assume that $\dim(G.y)=\dim(X)-1$. Set $\O=G.y$.
Consider the linear action of the group $G_y$ acts on quotient $T_yX/T_y\O$
of $T_yX$ by $T_y\O$. Since $X$ is normal, it is smooth at $y$ and 
$T_yX/T_y\O$ is a line. So, the action of $G_y$ defines a character 
$\chi\,:\,G_y\longto \KK^*$.

Let $T_y$ be a maximal torus of $G_y$. 
Let $S$ denote the neutral component of the kernel of the restriction 
of $\chi$ to $T_y$.
We claim that $S$ has fixed points in $G.x$.
Set $\Omega=G.x\cup\O$; it is open in $X$ and hence it is a smooth variety.
By a result of Bialynicki-Birula, we have 
$T_x(\Omega^S)=(T_x\Omega)^S$.
In particular, $\Omega^S$ is not contained in $\O$. 
This proves the claim.

By the claim, a subgroup conjugated to $S$ fixes $x$ and:
$$
\rk(H)\geq\dim(S)=\dim(T_y)-1=\rk(G_y)-1.
$$
Moreover, since $X$ is toroidal $\rk(G/H)=\rk(\O)+1$.
The lemma follows.
\end{proof}

\parag
The fixed points of a maximal torus of $G$ in the toroidal embeddings 
of spherical homogeneous spaces of minimal rank are easy to localize. 
Indeed, we have:

\begin{prop}
\label{prop:embcaract}
 Let  $(G,H)$ be a spherical pair and $T$ be a maximal torus of $G$.
The following are equivalent:
\begin{enumerate}
\item \label{ass:embcaract1}$(G,H)$ is of minimal rank.
\item \label{ass:embcaract2}There exists a complete toroidal embedding $(X,x)$ of $G/H$ such that
for all $x\in X^T$ $G.x$ is complete.
\item \label{ass:embcaract3}For all  complete toroidal embedding $(X,x)$ of $G/H$ and
for all $x\in X^T$,  $G.x$ is complete.
\end{enumerate}
\end{prop}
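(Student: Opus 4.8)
The plan is to prove the cycle $(\ref{ass:embcaract1})\Rightarrow(\ref{ass:embcaract3})\Rightarrow(\ref{ass:embcaract2})\Rightarrow(\ref{ass:embcaract1})$, the last implication being the substantial one.

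The implication $(\ref{ass:embcaract3})\Rightarrow(\ref{ass:embcaract2})$ is immediate, since $G/H$ does admit a complete toroidal embedding: one refines the valuation cone of $G/H$ by a complete fan without colours. For $(\ref{ass:embcaract1})\Rightarrow(\ref{ass:embcaract3})$, let $(X,x)$ be a complete toroidal embedding and $z\in X^T$. As $z$ is fixed by $T$ we have $T\subseteq G_z$, hence $\rk(G_z)=\rk(G)$; and since $(G,H)$ is of minimal rank, Lemma~\ref{lem:rkisot} shows that $G.z$ is of minimal rank too, so that $\rk(G.z)=\rk(G)-\rk(G_z)=0$ (see Proposition~\ref{prop:defequivHGB}). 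I would conclude by recalling the standard fact that a spherical homogeneous space of rank $0$ is a partial flag variety, hence complete; so $G.z$ is complete.

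The heart of the matter is $(\ref{ass:embcaract2})\Rightarrow(\ref{ass:embcaract1})$, which I would prove by contraposition. Assume $(G,H)$ is not of minimal rank and set $\delta:=\rk(G/H)+\rk(H)-\rk(G)>0$. Fix any complete toroidal embedding $(X,x)$; by Lemma~\ref{lem:rkisot} every $z\in X^T$ satisfies $\rk(G.z)\le\delta$ (again because $\rk(G_z)=\rk(G)$), so it suffices to exhibit a single $z\in X^T$ lying on a $G$-orbit of positive rank: that orbit is then not complete and hence $(\ref{ass:embcaract2})$ fails for $X$. To find it I would start from a closed orbit $\O_0\cong G/Q$ of $X$ ($Q$ parabolic), take its base point to be $T$-fixed, and use the local structure of the toroidal embedding along $\O_0$: a $G$-saturated neighbourhood of $\O_0$ is $G\times_Q Z_\sigma$, where $Z_\sigma$ is the affine toric variety attached to the top-dimensional cone $\sigma$ of the fan of $X$ and the Levi $L$ of $Q$ acts through $L/L'$. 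The $T$-fixed points of this chart, and of its translates by representatives of $W$, all lie in $\O_0$; so the desired $z$ must come from a chart translated by some $g$ with $g^{-1}Tg\not\subseteq Q$. Then, reducing to codimension-one steps $\O\subset\overline{\O'}$ on the smooth open set $\O'\cup\O$ exactly as in the proof of Lemma~\ref{lem:rkisot}, and tracking a one-parameter subgroup of $T$ together with the character through which $G_y$ acts on the normal line of $\O$, I would build a $T$-fixed point $z$ lying on an orbit of rank $\delta$; as $\delta>0$, this orbit is not complete.

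The main obstacle — and where the genuine work is — is precisely this last step: proving that the bound of Lemma~\ref{lem:rkisot} is attained on $X^T$, i.e.\ that \emph{some} $G$-orbit of $X$ both meets $X^T$ (equivalently, has isotropy containing a maximal torus of $G$) and has positive rank. The local-chart discussion above explains why this is subtle: every ``obvious'' $T$-fixed point sits on a complete orbit, so the positive-rank $T$-fixed point is hidden, and its existence has to be extracted from a Bialynicki--Birula argument combined with the orbit-chain structure of the toroidal embedding.
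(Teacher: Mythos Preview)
Your treatment of $(\ref{ass:embcaract1})\Rightarrow(\ref{ass:embcaract3})\Rightarrow(\ref{ass:embcaract2})$ is correct and matches the paper. The gap you yourself identify in $(\ref{ass:embcaract2})\Rightarrow(\ref{ass:embcaract1})$ is genuine: your sketch does not actually produce the $T$-fixed point on a positive-rank orbit, and the local-chart discussion you outline does not close the argument. In particular, ``tracking a one-parameter subgroup together with the normal character'' along a chain of codimension-one degenerations is the mechanism behind Lemma~\ref{lem:rkisot}, which only gives an inequality; it does not by itself show the bound is attained on $X^T$.

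The paper fills this gap with a different, direct device: Lemma~\ref{lem:torrk}. Fix a regular one-parameter subgroup $\lambda$ of $T$ with $X^\lambda=X^T$, and for any $y$ in the open $G$-orbit set $z=\lim_{t\to 0}\lambda(t)y$. Using the local structure theorem of \cite{BL} (a slice $S$ at $z$, the identification of $B(\lambda).y$ with the Bialynicki--Birula cell $X(\lambda,z)\cap G.x$, and the fact that $G.x\cap S$ is a single $T$-orbit), one obtains the exact formula
\[
\rk\big(B(\lambda).y\big)=\rk(G/H)-\rk(G.z).
\]
Under assumption~(\ref{ass:embcaract2}) every such $z$ lies on a complete $G$-orbit, i.e.\ $\rk(G.z)=0$, so every $B(\lambda)$-orbit in $G/H$ has rank $\rk(G/H)$; by Proposition~\ref{prop:defequivHGB} this is precisely minimal rank. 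Note that the same formula also yields your contrapositive cleanly: if $(G,H)$ is not of minimal rank, choose $y$ in a $B(\lambda)$-orbit of rank strictly less than $\rk(G/H)$ (such orbits exist by Proposition~\ref{prop:defequivHGB}); then $z=\lim_{t\to 0}\lambda(t)y\in X^T$ lies on a $G$-orbit of positive rank. So the missing idea is not a different strategy but this rank identity, and the real work is its proof via the slice theorem of \cite{BL}.
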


\begin{proof}
We assume that $(G,H)$ is of minimal rank and fix a complete toroidal 
embedding $(X,x)$ of $G/H$.
Let $y\in X^T$.
Lemma~\ref{lem:rkisot} shows that $\rk(G.y)=0$; that is $G.y$ is complete.
This proves that  Assertion~\ref{ass:embcaract1} implies 
Assertion~\ref{ass:embcaract3}.\\

Conversely, let $(X,x)$ satisfying Assertion~\ref{ass:embcaract2}.
It remains to prove that $(G,H)$ is of minimal rank.

Let $\lambda$ be a one-parameter subgroup of $T$ such that 
$T$ is the centralizer of the image of $\lambda$ 
(that is, $\lambda$ is regular) and $X^\lambda=X^T$ (where $X^\lambda$ 
denote the set of fix points of the image of $\lambda$).
Since $\lambda$ is regular, 
the set $g\in G$ such that $\lim_{t\to 0} \lambda(t)g\lambda(t^{-1})$ 
exists in $G$
is a Borel subgroup of $G$ denoted by $B(\lambda)$.
By Proposition~\ref{prop:defequivHGB}, it is sufficient to prove that for all
$y\in G.x$ we have $\rk(B(\lambda).y)=\rk(G/H)$.
This holds by Lemma~\ref{lem:torrk} below since the rank of a complete $G$-orbit 
equals zero.
\end{proof}

\begin{lemma}
\label{lem:torrk}
Let $(X,x)$ be a complete toroidal embedding of the spherical homogeneous
space $G/H$.
Let $y$ be a point in the open $G$-orbit in $X$
Let $\lambda$ be a regular one-parameter subgroup of $T$ such 
that $X^\lambda=X^T$.
Set $z=\lim_{t\to 0}\lambda(t)y$.

Then, we have:
$$
\rk(G/H)-\rk(B(\lambda).y)=\rk(G.z).
$$
\end{lemma}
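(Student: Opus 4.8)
The strategy is to reduce everything to a computation with the $B(\lambda)$-action by passing to the open $G$-orbit and tracking the character group under the limit $y\mapsto z$. The key point is that for a spherical $G$-variety, ranks of orbits are governed by the lattices $\Chi(-)$ of $B$-weights of rational eigenfunctions, and since $\lambda$ is regular with $X^\lambda=X^T$, the Borel subgroup $B(\lambda)$ plays the role of $B$ in the definition of rank. Concretely, $\rk(B(\lambda).y)=\rk(\Chi(\overline{B(\lambda).y}))$, and similarly for the complete orbit $G.z$ one has $\rk(G.z)=\rk(\Chi(G.z))$. The equality to be proved is therefore a statement about a short exact sequence relating these three lattices.

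First I would recall that for a spherical homogeneous space $G/H$, the rank equals the rank of the lattice $\Chi(G/H)$ of $B$-eigenvalues on $\KK(G/H)^{(B)}$, and that this is independent of the choice of Borel; in particular $\rk(G/H)=\rk(\Chi(X))$ where $\Chi(X)$ uses $B(\lambda)$-eigenfunctions, since $y$ lies in the open $G$-orbit. Next, the retraction $y\mapsto z=\lim_{t\to0}\lambda(t)y$ realizes $\overline{B(\lambda).y}$ as a $B(\lambda)$-stable subvariety whose associated torus embedding picture splits the weight lattice: a $B(\lambda)$-eigenfunction on $X$ that does not vanish or blow up along the ``attracting'' stratum restricts to an eigenfunction on $\overline{G.z}$, while the remaining directions contribute weights seen by $\lambda$. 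The cleanest route is to observe, via the Bialynicki--Birula decomposition of the smooth locus of $X$ (or of the open set $\Omega=G.x\cup (G\text{-orbit of }z)$ as in the proof of Lemma~\ref{lem:rkisot}), that $z$ lies in a complete $G$-orbit $\O_z=G.z$ and that $B(\lambda).y$ is an open subset of the $B(\lambda)$-stable attracting set of a point of $\O_z$; its character group therefore fits into an exact sequence
$$
0\longto \Chi(\O_z)\longto \Chi(X)\longto \Chi(B(\lambda).y / \O_z)\longto 0,
$$
whose outer terms have ranks $\rk(G.z)$ and $\rk(B(\lambda).y)$ — the latter because the fibre of the retraction $\overline{B(\lambda).y}\to \O_z$ near $z$ is an affine space with its own torus action, contributing exactly $\rk(B(\lambda).y)-0$ independent weights once one notes $\rk$ of the affine fibre through a fixed point is what is left. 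Taking ranks gives $\rk(\Chi(X))=\rk(G.z)+\rk(B(\lambda).y)$, i.e. the claimed identity after substituting $\rk(G/H)=\rk(\Chi(X))$.

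To make this rigorous I would proceed in steps: (i) reduce to the case $z\in X^T$-adjacent orbit, i.e. work inside the open set $\Omega$ swept by the open $G$-orbit together with $\O_z$, which is smooth since $X$ is normal hence smooth in codimension one and $B(\lambda)$-equivariance lets us shrink; (ii) identify $\overline{B(\lambda).y}\cap\Omega$ with the BB-cell $\{p\in\Omega:\lim_{t\to0}\lambda(t)p\in \O_z\}$ intersected with the $B(\lambda)$-orbit of $y$, and use that this cell retracts $T$-equivariantly onto $\O_z\cap\Omega$ with affine-space fibres; (iii) on function fields, pull back $B(\lambda)$-eigenfunctions from $\O_z$ and adjoin coordinate functions on the fibre to generate $\KK(B(\lambda).y)^{(B(\lambda))}$ up to finite index, giving the exact sequence of lattices above; (iv) read off ranks, using that $\O_z$ complete forces $\rk(G.z)=\dim\Chi(\O_z)$ with no correction, and that $\rk(G/H)=\rk(G.y)$ with $G.y$ the open orbit.

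\textbf{Main obstacle.} The delicate step is (iii): controlling the character group of the $B(\lambda)$-orbit of $y$ — not of the whole BB-cell — and matching it with the fibre coordinates, so that the middle term of the exact sequence genuinely has rank $\rk(B(\lambda).y)$ rather than something larger; equivalently, showing the retraction induces a \emph{surjection} $\Chi(X)\to \Chi(B(\lambda).y)/\Chi(\O_z)$ with the expected rank and no unexpected torsion-free quotient. Here I expect to need the explicit structure of the $\lambda$-action on the normal space $T_z X/T_z\O_z$ — which by regularity of $\lambda$ decomposes into distinct $T$-weight lines — together with the fact, already exploited in Lemma~\ref{lem:rkisot} and Lemma~\ref{lem:torrk}'s setup, that $B(\lambda)$ acts with a dense orbit on (the relevant part of) this normal space. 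Once that local picture is pinned down, the lattice bookkeeping and the conclusion $\rk(G/H)-\rk(B(\lambda).y)=\rk(G.z)$ follow formally.
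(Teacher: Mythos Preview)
Your plan shares with the paper the use of the Bialynicki--Birula cell $X(\lambda,z)$ and the identification $X(\lambda,z)\cap G.x=B(\lambda).y$ (which the paper also cites from [BL]), but the lattice-sequence packaging has problems that are more than cosmetic, and the paper takes a different, more concrete route.

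The exact sequence $0\to\Chi(G.z)\to\Chi(X)\to\Chi(B(\lambda).y/G.z)\to0$ is not well-formed. For a toroidal embedding the natural map between weight lattices is restriction, a surjection $\Chi(G/H)\to\Chi(G.z)$, not an injection the other way. More seriously, the middle term $\Chi(X)=\Chi(G/H)$ has rank $\rk(G/H)$, whereas the object you want to relate to the ``fibre'' is $\Chi(B(\lambda).y)$, of rank $\rk(B(\lambda).y)$ --- and identifying those two ranks is essentially the content of the lemma. Your step (iii), which you rightly flag as the obstacle, is circular as written: saying the fibre ``contributes exactly $\rk(B(\lambda).y)$ independent weights'' presupposes the answer. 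Note also that $z\in X^\lambda=X^T$, so the Bialynicki--Birula retraction sends the whole cell to the single point $z$, not to $G.z$; the fibration over $G.z$ you describe is not produced by the BB decomposition alone.

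The paper instead invokes the local structure theorem of Brion--Luna: there are a parabolic $P\supset T$ with $P_z$ reductive, a closed $P_z$-stable $A\subset Q^u$ (with $Q$ opposite to $P$), and an affine slice $S\ni z$ with $S\cap G.z=\{z\}$ such that the action map $\Theta:P\times_{P_z}(A\times S)\to X$ is an open immersion. Two facts then do all the work. First, because $X$ is toroidal, $\dim S=\dim(G/H)-\dim(G.z)=\rk(G/H)-\rk(G.z)$; this is precisely where the toroidal hypothesis enters, and your plan never uses it. Second, $G.x\cap S$ is a single $T$-orbit $T.y'$ with $y'\in B(\lambda).y$, and $\Theta$ restricts to an isomorphism
\[
\bigl(P\cap B(\lambda)\bigr)\times_{B(\lambda)_z}\bigl((A\cap B(\lambda))\times T.y'\bigr)\;\longrightarrow\;B(\lambda).y.
\]
Since $T\subset B(\lambda)_z$, one reads off $\rk(B(\lambda).y)=\dim T.y'=\dim S$. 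Combining the two identities gives the lemma. The slice $S$ is exactly the structure your steps (ii)--(iii) are reaching for; once it is in hand the argument is a two-line dimension count and no exact sequence of lattices is needed.
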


\begin{proof}
Let us introduce some material and notation from \cite{BL}.
There exists a parabolic subgroup $P$ of $G$ containing $T$ 
such that $P_z$ is reductive.
Let   $Q$ denote the parabolic subgroup of $G$ containing $T$ and opposite 
to $P$.
We have $G_z^u\subset Q^u$ and there exists a closed subvariety 
$A\subset Q^u$ $P_z$-stable such that the product in $G$ induces an isomorphism
$A\times Q^u_z\longto Q^u$.
By \cite[Lemma 1.1]{BL}, there exists a locally closed affine normal and 
irreducible subvariety $S$ of $X$ such that $S\cap G.z=\{z\}$, $S$ is 
$P_z$-stable and the morphism $G\times S\longto X$ induced by the action 
is smooth at $(1,z)$.
In particular, we have:
\begin{eqnarray}
  \label{eq:dimS}
  \dim(S)=\dim(G/H)-\dim(G.z)=\rk(G/H)-\rk(G.z).
\end{eqnarray}
Let $P\times_{P_z}(A\times S)$ denote the quotient of $P\times A\times S$ 
by the action of $P_z$ defined by
$
p.(q,a,s)=(qp^{-1},pap^{-1},ps),
$
where $p\in P_z$, $(q,a,s)\in P\times A\times S$.
The group $P$ acts naturally on this variety.
By \cite[Theorem 5]{BL}, the morphism
$$
\begin{array}{cccl}
\Theta\ :&P\times_{P_z}(A\times S)&\longto&X\\
&(p:(a,s))&\longmapsto&(pa).s
\end{array}
$$
is an open immersion.

Consider the Bialynicki-Birula cell 
$$
X(\lambda,z)=\{p\in X\ :\ \lim_{t\to 0}\lambda(t)p=z\}.
$$
Notice that $y\in X(\lambda,z)$.
By \cite[Propositions 2.1 and 2.3]{BL}, 
$X(\lambda,z)\cap G.x=B(\lambda).y$ and $G.x\cap S=T.y'$ for some 
$y'\in B(\lambda).y$.
Then, the proof of \cite[Proposition 2.3]{BL} shows that 
$\Theta$ induces by restriction an isomorphism:
$$
\bigg ( P\cap B(\lambda)\bigg )\times_{B(\lambda)_z}
\bigg ( (A\cap B(\lambda))\times T.y'\bigg )
\longto B(\lambda).y.
$$
Since $T$ is contained in $B(\lambda)_z$, this isomorphism implies that 
\begin{eqnarray}
  \label{eq:rkBy}
  \rk(B(\lambda).y)=\dim(T.y')=\dim S.
\end{eqnarray}
The lemma follows from Equalities~\ref{eq:dimS}~and~\ref{eq:rkBy}.
\end{proof}

\section{Reduction to the case when $G$ and $H$ are semi-simple}
\label{sec:red}

The goal of this section is to reduce the classification of the spherical pairs
$(G,H)$ to those with $G$ semi-simple adjoint and $H$ semi-simple.

\begin{prop}
\label{prop:redHred}
  Let $(G,H)$ be a spherical pair of minimal rank.

Then, there exists a parabolic subgroup $P$ of $G$ 
with a Levi decomposition $P=P^uL$ and a reductive subgroup 
$K$ of $L$ such that:
\begin{enumerate}
\item $H=P^uK$
\item $(L,K)$ is a spherical pair of minimal rank.
\end{enumerate}
\end{prop}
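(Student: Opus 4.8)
The goal is to show that a spherical pair $(G,H)$ of minimal rank "comes from" a pair $(L,K)$ with $K$ reductive inside a Levi $L$, with $H=P^uK$. The natural object to look at is a Borel subgroup $B'$ of $H$ and the parabolic it "generates" inside $G$. Concretely, I would fix a maximal torus $T_H$ of $H$ and a point $x\in\GB$ with $H.x$ open and $T_H\subset H_x$ (this is the defining property, Assertion~\ref{ass:defequivHGB0} of Proposition~\ref{prop:defequivHGB}); after translating, take $x=B/B$, so $H_x=H\cap B$ and $T_H\subset B$. The unipotent radical $H^u$ of $H$ is then contained in $B^u$. The candidate for $P$ is $P:=N_G(H^u)$, or more precisely the parabolic generated by $B$ and by a maximal torus $T\supset T_H$ together with the root groups "seen" by $H$; one has to show $P$ is parabolic, write $P=P^uL$ with $T\subset L$, and check $H^u=P^u$ and $H\cap L=:K$ is reductive with $H=P^uK$.

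**Key steps, in order.** (i) First reduce to the situation $H_x=H\cap B$ with $T_H\subset T\subset B$ for some maximal torus $T$ of $G$. Since $H.x$ is open, $\dim H-\dim H_x=\dim\GB$, i.e. $H_x$ has the "expected" codimension. (ii) Show $H^u\subseteq B^u$ and consider $P:=N_G(H^u)$. Because $H^u$ is a unipotent subgroup normalized by $T_H$ and lying in $B^u$, its normalizer contains $T_H$ and a Borel, hence is a parabolic $P\supseteq B$; moreover $H\subseteq P$ since $H$ normalizes its own unipotent radical. Fix a Levi decomposition $P=P^uL$ with $T\subset L$ (enlarging $T_H$ to a maximal torus $T$ of $G$ inside $B$). (iii) Show $P^u=H^u$. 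One inclusion: $H^u\subseteq P^u$ since $H^u$ is a connected normal unipotent subgroup of $P$. For the reverse, I would use the minimal-rank hypothesis: the unique closed $H$-orbit $V_0$ in $\GB$ has rank $\rk(G)-\rk(H)$ (Proposition~\ref{prop:min} and the recollections in the proof of Proposition~\ref{prop:defequivHGB}), and counting dimensions of $H$-orbits versus the structure of $\GB$ forces $H$ to surject onto the flag variety $L/B_L$ of $L$ with the fibre being exactly the $P^u$-orbit; equivalently $H P^u=P$ and $\dim H - \dim(H\cap P^u)=\dim P-\dim P^u$. Combined with $H^u\subseteq P^u$ this pins down $P^u\subseteq H\cdot(\text{something reductive})$ and ultimately $H^u=H\cap P^u=P^u$. (iv) Set $K:=H\cap L$. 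From $H^u=P^u\subseteq H$ and a Levi-type argument (Mostow / conjugacy of Levi factors) one gets $H=P^u\rtimes K$ with $K$ reductive, containing $T_H$. (v) Finally, check $(L,K)$ is of minimal rank: $K$ is spherical in $L$ because $H=P^uK$ is spherical in $G$ and the fibration $\GB\to L/B_L$ is $P^u$-trivial on the open $H$-orbit; and $\rk(L)-\rk(K)=\rk(G)-\rk(H)$ since $\rk(L)=\rk(G)$ and $\rk(K)=\rk(H)$ (as $T_H\subseteq K$ is a maximal torus of both $H$ and $K$), while $\rk(L/K)=\rk(G/H)$ because the $B$-stable-function-field ranks agree through the $P^u$-bundle; then invoke the equivalence \ref{ass:defequivHGB1}$\Leftrightarrow$\ref{ass:defequivHGB0} of Proposition~\ref{prop:defequivHGB}.

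**Main obstacle.** The delicate point is step (iii), the equality $P^u=H^u$ (equivalently, that the open $H$-orbit in $\GB$ maps onto all of $L/B_L$ with the correct fibre dimension, so that $H$ is "as big as it can be" in the $P^u$-direction). Everything else is bookkeeping with parabolics and Levi factors, but this step is where the minimal-rank hypothesis must really be used — a priori a spherical $H$ need not have $H^u$ equal to the unipotent radical of the parabolic it generates, and it is precisely the condition that all closed $H$-orbits in $\GB$ have rank $\rk(G)-\rk(H)$ (so that $H\cap B$ contains a maximal torus of $H$ and no "extra" semisimple rank is lost) that forces the open $H$-orbit to be a single $P^u$-bundle over $L/B_L$. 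I would isolate this as the heart of the argument, proving $\dim(H\cap P^u)=\dim P^u$ by comparing $\dim H_x$, $\dim \GB$, and $\dim(L/B_L)$, and then upgrading the dimension equality to $H^u=P^u$ using connectedness and unipotence.
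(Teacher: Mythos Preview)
Your overall architecture matches the paper's: embed $H$ in a parabolic $P=P^uL$ with $H^u\subseteq P^u$ and a Levi factor $K$ of $H$ inside $L$, then prove $H^u=P^u$ and deduce that $(L,K)$ is of minimal rank. But the heart, your step (iii), has a genuine gap.

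Working in the open $P$-cell $P^u\times\GB(L)$ with $x=(u,y)$, the dimension identities you propose yield only
\[
\dim P^u-\dim H^u \;=\; \dim K_y-\dim H_x \;\geq\; 0,
\]
and nothing in a pure dimension comparison forces this to vanish. Worse, your ``equivalently $HP^u=P$ and $\dim H-\dim(H\cap P^u)=\dim P-\dim P^u$'' is actually false: since $H\cap P^u=H^u$ and $H/H^u\cong K$, either equality says $K=L$, which already fails for $(\PSL_4,\PSp_4)$. So the dimension count cannot close the argument.

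The paper supplies the missing mechanism. From the openness of $H.x$ one sees that the $K_y$-orbit of $uH^u$ in the affine variety $P^u/H^u$ is open and dense. The minimal-rank hypothesis enters precisely here: since $T(H)$ fixes $x$, it fixes $uH^u$, and as $K_y$ is solvable with maximal torus $T(H)$, this orbit coincides with a $K_y^u$-orbit; being a unipotent orbit in an affine variety it is \emph{closed}. Hence $K_y$ acts transitively on $P^u/H^u$. But $K_y^u\subset K\subset H$ normalizes $H^u$, so the coset $H^u$ itself is a fixed point of $K_y$; transitivity forces $P^u/H^u$ to be a point. This ``open $+$ closed $+$ fixed point'' trick is the idea your sketch lacks.

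Two smaller remarks. Your choice $P=N_G(H^u)$ turns out a posteriori to be correct (once $H^u=P^u$ is known, $N_G(H^u)=N_G(P^u)=P$), but your justification that it is parabolic is unsupported; the paper instead invokes the Borel--Tits type result (Humphreys, 30.3) to produce $P$ with $H^u\subseteq P^u$ and $K\subseteq L$ directly. And for step (v) the paper argues more simply than via ranks: since $T(H)$ fixes $x=(u,y)$ it fixes $y\in\GB(L)$, and $K.y$ is open there, so criterion~\ref{ass:defequivHGB0} of Proposition~\ref{prop:defequivHGB} applies to $(L,K)$ immediately.
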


\begin{proof}
  We can write $H=H^uK$, where $K$ is a reductive subgroup of $H$.
But, by [Humphreys, 30.3], there exists a parabolic subgroup 
$P=P^uL$ of $G$ such that $H^u\subseteq P^u$ and $K\subseteq L$.
We claim that $P$ and $L$ satisfy the proposition.\\

Let firstly prove that $P^u=H^u$.

Let $T(H)$ be a maximal torus of $K$ (and hence of $H$).
The variety $\GB$ contains on open subset stable by $P$ (and hence by $H$) 
isomorphic to $P^u\times \GB(L)$ (with obvious notation). 
By assumption, there exists a point $x$ in $\GB$ fixed by $T(H)$ such that 
$H.x$ is open in $\GB$.
But, $x=(u,y)$ belongs to $P^u\times \GB(L)$.
Since the $H$-orbit of $x$ is open in  $P^u\times \GB(L)$, 
so is its intersection with $P^u\times \{y\}$. 
Hence, the set of the $hkuk^{-1}\in P^u$ such that $h\in H^u$ and $k\in K_y$ 
is open and dense in $P^u$.
In particular, the $K_y$-orbit $K_y.(uH^u)$ is open and dense in $P^u/H^u$.

Since $x\in\GB^{T(H)}$, $uH^u\in (P^u/H^u)^{T(H)}$.
But, $K_y$ is a solvable group with $T(H)$ as maximal torus. 
So, $K_y^\circ.uH^u$ is one orbit of the unipotent 
radical $K_y^u$ of $K_y^\circ$.
In particular, it is closed in the affine variety $P^u/H^u$.
But it is also open. 
We  deduce that $K_y$ acts transitively on $P^u/H^u$.

But $K_y^u$ is contained is $K$ and normalizes $H^u$. 
So, $H^u$ is a fix point of $K_y$ in $P^u/H^u$. 
We deduce that $P^u/H^u$ has only one point ; that is, that $P^u=H^u$.\\

On the other hand, $K.y$ is open in $\GB(L)$ and $y$ is fixed by 
the maximal torus $T(H)$ of $K$. We deduce that $(L,K)$ is a spherical pair of minimal rank.
\end{proof}\\

Since the parabolic subgroups of a given reductive group are very well
known, Proposition~\ref{prop:redHred} reduces the problem of 
classification of the spherical pairs $(G,H)$ of minimal rank to 
the case when $H$ is reductive.

\begin{prop}
\label{prop:redHssc}
Let $G$ be a connected reductive group.
Set $G_{\rm ad}=G/Z(G)$ and 
consider the projection $p\,:\,G\longto G_{\rm ad}$.
Let $H$ be a reductive subgroup of $G$. 
Then,
\begin{enumerate}
\item 
\label{ass:redHssc1}
The pair $(G,H)$ is spherical of minimal rank if and only if 
the pair $(G_{\rm ad},p(H))$ is.
\item \label{ass:redHssc2} 
The pair $(G,H)$ is spherical of minimal rank if and only if 
the pair $(G,H^\circ)$ is.
\item \label{ass:redHssc3}
If $G/H$ is of minimal rank, 
the neutral component $p(H)^\circ$ of $p(H)$ is semi-simple.
\end{enumerate}
\end{prop}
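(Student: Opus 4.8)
The plan is to prove the three assertions of Proposition~\ref{prop:redHssc} in order, using the characterization of minimal rank from Proposition~\ref{prop:defequivHGB}, specifically the first (geometric) formulation: $(G,H)$ is of minimal rank if and only if there exists $x\in\GB$ with $H.x$ open and $H_x$ containing a maximal torus of $H$. The key observation is that the flag variety $\GB$ depends only on the adjoint group, and that orbits and isotropy groups behave predictably under isogenies and under passing to identity components.

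For Assertion~\ref{ass:redHssc1}, note that $Z(G)$ acts trivially on $\GB$, so the $G$-action on $\GB$ factors through $G_{\rm ad}$, and the $H$-orbits on $\GB$ coincide with the $p(H)$-orbits. Thus $H.x$ is open if and only if $p(H).x$ is. For the isotropy condition: $p(H)_x = p(H_x)$ since $Z(G)\subseteq H_x$ (as $Z(G)$ fixes everything), and $p$ maps a maximal torus of $H$ onto a maximal torus of $p(H)$ (because $Z(G)^\circ$, being central, lies in every maximal torus of $G$ and in every maximal torus of $H$ meeting it appropriately — more carefully, $p$ restricted to a maximal torus $T(H)$ of $H$ is surjective onto a maximal torus of $p(H)$). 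Hence $H_x\supseteq T(H)$ if and only if $p(H)_x\supseteq p(T(H))$, a maximal torus of $p(H)$. This gives the equivalence.

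For Assertion~\ref{ass:redHssc2}, since $H^\circ$ has finite index in $H$, an $H^\circ$-orbit on $\GB$ has the same dimension as its saturation under $H$; so $H.x$ is open if and only if $H^\circ.x$ is open. Moreover a maximal torus of $H$ is the same as a maximal torus of $H^\circ$ (tori are connected), and $(H^\circ)_x = H_x\cap H^\circ$ contains a given maximal torus $T(H)$ if and only if $H_x$ does, since $T(H)\subseteq H^\circ$ automatically. So the geometric criterion holds for $(G,H)$ exactly when it holds for $(G,H^\circ)$.

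For Assertion~\ref{ass:redHssc3}, by Assertions~\ref{ass:redHssc1} and~\ref{ass:redHssc2} we may replace $(G,H)$ by $(G_{\rm ad},p(H)^\circ)$, which is still spherical of minimal rank; write $K=p(H)^\circ$, a connected reductive subgroup of the semisimple adjoint group $G_{\rm ad}$. I must show $K$ is semisimple, i.e. that its radical (central torus) $Z(K)^\circ$ is trivial. Apply Proposition~\ref{prop:redHred} to the pair $(G_{\rm ad},K)$: since $K$ is already reductive, the unipotent radical is trivial, so the parabolic $P$ produced there can be taken with $P^u=\{1\}$, forcing $P=G_{\rm ad}$ and $L=G_{\rm ad}$ — wait, this needs the actual content. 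Instead I would argue directly: let $S=Z(K)^\circ$ be the central torus of $K$, and pick $x\in\GB$ with $K.x$ open and $K_x\supseteq T(K)$ a maximal torus of $K$. Since $S\subseteq T(K)\subseteq K_x$, the torus $S$ fixes $x$; but $S$ is central in $K$, so $S$ fixes every point of the open orbit $K.x$, hence (by density and continuity, $\GB$ separated) $S$ fixes all of $\GB$. A subtorus of the semisimple adjoint group $G_{\rm ad}$ fixing every point of $\GB$ must be trivial, since $G_{\rm ad}$ acts faithfully on $\GB$ (the kernel of the $G_{\rm ad}$-action on $\GB$ is the intersection of all Borel subgroups, which is $Z(G_{\rm ad})=\{1\}$). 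Therefore $S=\{1\}$ and $K$ is semisimple. The main obstacle is being careful that $S$ fixes the open orbit pointwise: this follows because for $g\in K$, $g\cdot x$ has isotropy $gK_xg^{-1}\supseteq gSg^{-1}=S$, so every point of $K.x$ is $S$-fixed; the rest is topology.
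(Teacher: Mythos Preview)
Your proof is correct and follows essentially the same approach as the paper: parts~\ref{ass:redHssc1} and~\ref{ass:redHssc2} are reduced (as in the paper) to the geometric criterion of Proposition~\ref{prop:defequivHGB}\ref{ass:defequivHGB0}, and part~\ref{ass:redHssc3} is proved by showing that the connected center $S$ of $H$ (resp.\ of $K=p(H)^\circ$) acts trivially on $\GB$. The only cosmetic difference is in how this last point is argued: the paper observes that $H\subset G^S$, so $G^S.x$ is open in $\GB$ and, being an irreducible component of $\GB^S$, must equal $\GB$; you instead note directly that $S$, being central in $K$, fixes every point of the dense orbit $K.x$ and hence all of $\GB$ --- a slightly more elementary route that avoids invoking the structure of $\GB^S$.
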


\begin{proof}
Assertions~\ref{ass:redHssc1}~and~\ref{ass:redHssc2} are obvious from 
Assertion~\ref{ass:defequivHGB0} of Proposition~\ref{prop:defequivHGB}.

To prove the last assertion, it is sufficient to prove that the connected
center $S$ of $H$ is contained in the center of $G$.
There exists $x\in\GB$ fixed by $S$ such that $H.x$ is open in $\GB$.
Since, $H\subset G^S$, $G^S.x$ is open in $\GB$.
But, $G^S.x$ is an irreducible component of $\GB^S$.
Therefore, $\GB^S=\GB$ and $S$ is central in $G$.
\end{proof}\\

Proposition~\ref{prop:redHssc} reduces the problem of 
classification of the spherical pairs $(G,H)$ of minimal rank to 
the case when $G$ is semi-simple adjoint and $H$ is semi-simple.
From now on, we only consider such pairs.

\section{Classification of Lie algebras}
\label{sec:class}

Let $(G,H)$ be a spherical pair of minimal rank  with
$G$ semi-simple adjoint and $H$ semi-simple.
Let $\lg$ (resp. $\lh$) denote the Lie algebra of $G$ (resp. $H$).

\subsection{Root systems of $\lg$ and $\lh$}

Let $T(H)$ be a maximal torus of $H$. Let $T\supset T(H)$ be a maximal torus 
of $G$. 
Let $\Chi(T)={\rm Hom}(T,\KK^*)$ (resp. $\Chi(T(H))={\rm Hom}(T(H),\KK^*)$) 
denote the character group of $T$ (resp. $T(H)$). 
Let $\phi_\lg\subset \Chi(T)$ (resp. $\phi_\lh\subset \Chi(T(H))$) 
be the set of roots  of $\lg$ (resp. $\lh$).
Let $\rho\,:\,\Chi(T)\longto\Chi(T(H))$ be the restriction map.

In this subsection, we will prove some very constraining relations between
$\phi_\lg$, $\phi_\lh$ and $\rho$.

\subsubsection{}\label{sec:phigh}
The following stability of the set spherical pairs of minimal rank will be
used to localize the study over some fixed roots of $\lh$:

\begin{lemma}
\label{lem:loc}
  Let $S$ be a subtorus of $H$.

Then, $(G^S,\,H^S)$ is a spherical pair of minimal rank. 
\end{lemma}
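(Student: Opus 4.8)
The statement to prove is Lemma~\ref{lem:loc}: for a subtorus $S$ of $H$, the pair $(G^S, H^S)$ is again spherical of minimal rank. The natural strategy is to verify Assertion~\ref{ass:defequivHGB0} of Proposition~\ref{prop:defequivHGB} directly, now for the pair $(G^S,H^S)$ acting on the flag variety $\GB(G^S)$ of $G^S$. So the plan is: start from a point $x\in\GB$ with $H.x$ open and $H_x$ containing a maximal torus of $H$, arrange (by conjugating) that this maximal torus contains $S$, and then extract from $x$ a point of $\GB(G^S)$ witnessing the minimal rank property for $(G^S,H^S)$.

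First I would recall the standard fact (Bialynicki-Birula / connectedness of centralizers in reductive groups) that $G^S$ is a connected reductive group and that $\GB^S$, the fixed-point set of $S$ in $\GB$, is a disjoint union of flag varieties of $G^S$ — more precisely each irreducible component of $\GB^S$ is a single $G^S$-orbit isomorphic to $G^S/B'$ for a Borel subgroup $B'$ of $G^S$. Since $(G,H)$ is of minimal rank, choose $x\in\GB$ with $H.x$ open in $\GB$ and with $H_x\supseteq T(H)$ a maximal torus of $H$; after conjugating $x$ by an element of $H$ (which changes nothing) we may assume $S\subseteq T(H)\subseteq H_x$, so in particular $x\in\GB^S$. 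Let $\mathcal C$ be the irreducible component of $\GB^S$ containing $x$; then $\mathcal C\cong G^S/B'$ is a flag variety of $G^S$, and $H^S$ acts on it.

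The two things to check are then: (a) $H^S.x$ is open in $\mathcal C$, and (b) $(H^S)_x=H_x\cap G^S$ contains a maximal torus of $H^S$. For (b): $T(H)\subseteq H_x$ and $S\subseteq T(H)$, so $T(H)\subseteq H^S$ and $T(H)$ is a maximal torus of $H^S$ (a maximal torus of $H$ containing $S$ is a maximal torus of $H^S=C_H(S)$); hence $(H^S)_x\supseteq T(H)$, giving (b). For (a), the key point is a tangent-space computation at $x$: since $H.x$ is open in $\GB$, $T_x\GB = \mathfrak h\cdot x$ (image of the infinitesimal action); taking $S$-invariants and using that $S$ is linearly reductive so that $(-)^S$ is exact, one gets $T_x\mathcal C = (T_x\GB)^S = (\mathfrak h\cdot x)^S = \mathfrak h^S\cdot x = \operatorname{Lie}(H^S)\cdot x$, where the identification $(\mathfrak h\cdot x)^S=\mathfrak h^S\cdot x$ holds because $x$ is $S$-fixed so the action map $\mathfrak h\to T_x\GB$ is $S$-equivariant and $(-)^S$ is exact. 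This shows $H^S.x$ is open in $\mathcal C$, establishing (a). By Proposition~\ref{prop:defequivHGB} applied to $(G^S,H^S)$ and the point $x\in\mathcal C$, the pair $(G^S,H^S)$ is spherical of minimal rank.

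The main obstacle, and the point that needs care rather than invocation, is the structural input that each component of $\GB^S$ really is a full flag variety $G^S/B'$ — i.e. that the map "take $B$ to the component of $\GB^S$ through $eB$" sets up a correspondence between $S$-stable (equivalently $G^S$-stable) pieces and Borel subgroups of $G^S$. This is classical (it is in Bialynicki-Birula, or can be derived from the fact that $C_G(S)$ is reductive with the $C_G(S)$-fixed Borels of $G$ containing a Borel of $C_G(S)$), but one must make sure the component through our particular $x$ is of this form so that $T_x\mathcal C$ has the expected dimension and the openness argument in (a) is meaningful. Everything else — the exactness of $S$-invariants, the equality $T_x\GB=\mathfrak h\cdot x$ from openness, the maximal-torus bookkeeping in (b) — is routine.
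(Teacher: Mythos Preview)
Your proof is correct and follows the same overall architecture as the paper's: pick $x\in\GB$ with $H.x$ open and $T(H)\subseteq H_x$, arrange $S\subseteq T(H)$, and then verify the two conditions of Proposition~\ref{prop:defequivHGB}\ref{ass:defequivHGB0} for $(G^S,H^S)$ at the same point $x$, using that the component of $\GB^S$ through $x$ is $G^S.x\simeq\GB(G^S)$.

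The one genuine difference is in how openness of $H^S.x$ in $G^S.x$ is established. The paper argues set-theoretically: $H.x\cap G^S.x$ is open in $G^S.x$ (because $H.x$ is open in $\GB$), hence irreducible, hence an irreducible component of $(H.x)^S$; then a result of Richardson on torus fixed points in homogeneous spaces identifies this component with a single $H^S$-orbit, giving $H.x\cap G^S.x=H^S.x$. You instead pass to tangent spaces: from the surjection $\lh\twoheadrightarrow T_x\GB$ and exactness of $(-)^S$ you get $\lh^S\twoheadrightarrow (T_x\GB)^S=T_x(\GB^S)$, so $(H^S)^\circ.x$ is open in $G^S.x$. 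Your argument is more self-contained (no external citation needed) and perfectly adequate for the lemma; the paper's route via Richardson yields the slightly stronger set-theoretic statement $H^S.x=H.x\cap G^S.x$, which however is not used later.
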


\begin{proof}
Let $T(H)$ be a maximal torus of $H$ which contains $S$. 
  Let $x$ be a fixed point of $T(H)$ in $\GB$ such that $H.x$ 
is open in $\GB$.
Since $V\cap G^S.x$ is open in $G^S.x$, it is irreducible. 
So, it is an irreducible component of $V^S$.
Now, \cite{Rich} implies that $V\cap G^S.x=H^S.x$.
In particular, $H^S.x$ is open in $G^S.x\simeq \GB(G^S)$ and
$x$ is fixed by the maximal torus $T(H)$ of $H^S$. 
The lemma follows. 
\end{proof}

\begin{lemma}
\label{lem:rhophi}
With the above notation, we have 
$\rho(\phi_\lg)= \phi_\lh$. 
\end{lemma}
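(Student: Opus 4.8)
The plan is to prove the two inclusions $\rho(\phi_\lg)\subseteq\phi_\lh$ and $\phi_\lh\subseteq\rho(\phi_\lg)$ separately, using the localization Lemma~\ref{lem:loc} together with the list of equivalent characterizations of minimal rank in Proposition~\ref{prop:defequivHGB}. The key observation is that it suffices to work one root of $\lh$ at a time: for $\bar\beta\in\Chi(T(H))$ a primitive element, let $S_{\bar\beta}\subset T(H)$ be the codimension-one subtorus on which $\bar\beta$ vanishes (the neutral component of $\ker\bar\beta$). Then $H^{S_{\bar\beta}}$ is a reductive group of semisimple rank $\leq 1$ whose roots are exactly $\{\pm\bar\beta\}$ if $\bar\beta\in\phi_\lh$ and empty otherwise, and similarly $G^{S_{\bar\beta}}$ is reductive with root system $\{\alpha\in\phi_\lg:\rho(\alpha)\in\KK\bar\beta\}$ restricted appropriately. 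By Lemma~\ref{lem:loc}, $(G^{S_{\bar\beta}},H^{S_{\bar\beta}})$ is again a spherical pair of minimal rank, so I have reduced everything to understanding spherical pairs of minimal rank $(G',H')$ where $H'$ has semisimple rank at most one.

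For the inclusion $\phi_\lh\subseteq\rho(\phi_\lg)$: take $\bar\beta\in\phi_\lh$ and apply the reduction above. Now $H'=H^{S_{\bar\beta}}$ has a nontrivial semisimple part, so $\rk(H')<\dim H'$... more precisely $H'$ is not a torus. If no root $\alpha$ of $\lg$ restricts into $\KK\bar\beta$, then $G'=G^{S_{\bar\beta}}$ would have $T(H')=T\cap G'$ acting trivially-ish — I need $\bar\beta$ to appear in the $T(H')$-action on $\lg'$, i.e. on $\GB(G')$. Concretely, minimal rank forces $H'.x$ open in $\GB(G')$ with $H'_x\supseteq T(H')$; comparing the $T(H')$-weights on $T_x\GB(G')$ (which are negatives of roots of $\lg'$, restricted via $\rho$) with those on $T_x(H'.x)=\lh'/\lh'_x$, and using that the $\mathfrak{sl}_2$ or unipotent part of $\lh'$ must map onto a complementary piece, shows that $\pm\bar\beta$ must occur among $\rho(\phi_{\lg'})\subseteq\rho(\phi_\lg)$. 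The cleanest route is the tangent space / character group computation: since $\Chi(\GB(G'))$-type data and the rank equality $\rk(G')-\rk(H')=\rk(G'/H')$ pin down that the weight $\bar\beta$ cannot be "missing" from $\rho(\phi_\lg)$ without contradicting openness of $H'.x$.

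For the reverse inclusion $\rho(\phi_\lg)\subseteq\phi_\lh\cup\{0\}$, and then ruling out $0$: suppose $\alpha\in\phi_\lg$ with $\rho(\alpha)=\bar\beta\neq 0$, $\bar\beta\notin\phi_\lh$. Localize at $S_{\bar\beta}$ as before; then $H'=H^{S_{\bar\beta}}$ is a torus (namely $T(H)$ itself, possibly times a central piece), while $G'=G^{S_{\bar\beta}}$ contains the root $\alpha$ hence has positive semisimple rank — but then $\rk(G')-\rk(H') = \rk(G')-\rk(H) $ while $\rk(G'/H')$ for a torus $H'$ acting on $\GB(G')$ with a dense orbit forces $G'=H'$ (a torus acting with dense orbit on a flag variety has trivial root system), contradicting $\alpha\in\phi_{\lg'}$. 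For the possibility $\rho(\alpha)=0$: then $\alpha$ vanishes on all of $T(H)$, so $T(H)\subseteq$ the kernel, meaning $T(H)$ centralizes the root subgroup $U_\alpha U_{-\alpha}$; localizing at $S=T(H)$ gives $G^{T(H)}\supsetneq$ a nonabelian group containing $U_{\pm\alpha}$ while $H^{T(H)}=T(H)$, and again a torus cannot act with dense orbit on the nontrivial flag variety $\GB(G^{T(H)})$ — contradiction. The main obstacle I anticipate is making the "torus with dense orbit on $\GB$ forces triviality" and the weight-comparison arguments precise enough, in particular correctly identifying the reductive groups $G^{S}$, $H^{S}$ and checking that $\GB(G^S)=\GB^S$-component behaves as expected; but Lemma~\ref{lem:loc} does exactly this bookkeeping for us, so the remaining work is the elementary rank/weight arithmetic. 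I would also double-check the edge case where $\rho$ is not injective on $\phi_\lg$, but the argument only ever uses $\rho(\alpha)\in\KK\bar\beta$, which is robust to that.
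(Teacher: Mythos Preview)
Your argument for $\rho(\phi_\lg)\subseteq\phi_\lh$ is essentially the paper's: set $S=\Ker(\rho(\alpha))^\circ$ (which already unifies your two sub-cases, being all of $T(H)$ when $\rho(\alpha)=0$), apply Lemma~\ref{lem:loc}, observe that $G^S$ has positive semisimple rank because $\alpha$ is among its roots, and deduce that $(H^S)^\circ$ cannot be a torus---else the minimal-rank condition would force the open $H^S$-orbit in $\GB(G^S)$ to be a single fixed point. So on the substantive direction you and the paper agree.

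Where you diverge is the reverse inclusion $\phi_\lh\subseteq\rho(\phi_\lg)$, and here you are working much harder than necessary. The paper dispatches this in half a line from the inclusion $\lh\subset\lg$ alone, with no localisation and no open-orbit input: each $\beta\in\phi_\lh$ is a nonzero $T(H)$-weight occurring in $\lh$, hence in $\lg$; decomposing $\lg$ into $T$-root spaces and restricting to $T(H)$, the nonzero $T(H)$-weights of $\lg$ are precisely the $\rho(\alpha)$ for $\alpha\in\phi_\lg$, so $\beta\in\rho(\phi_\lg)$. Your tangent-space comparison at the open-orbit point is not incorrect---once you identify $T_x\GB(G')$ with a quotient of $\lg'$ it unwinds to this same weight inclusion---but the detour through Lemma~\ref{lem:loc} and Proposition~\ref{prop:defequivHGB} is entirely avoidable for this direction.
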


\begin{proof}
  Let $\a\in\phi_\lg$.
Set $S={\rm Ker}(\rho(\a))^\circ\subset T(H)$.
By Lemma~\ref{lem:loc}, $H^S$ is a spherical subgroup of $G^S$ and 
$\rk(G^S/H^S)=\rk(G^S)-\rk(H^S)$. Since the semi-simple rank of $G^S$
is one, this implies that ${H^S}^\circ$ is not a torus.
So, $\rho(\a)$ is a root of $\lh$.

Moreover, since $\lh\subset\lg$, $\phi_\lh\subset\rho(\phi_\lg)$.
\end{proof}\\

By Lemma~\ref{lem:rhophi}, we can define the map
$\rhobar\,:\,\phi_\lg\longto\phi_\lh,\,\a\longmapsto\rho(\a)$.

\begin{lemma}
\label{lem:hsl2}
The spherical pairs $(G,H)$ of minimal rank with 
$G$  semi-simple adjoint, $H$ connected and  
$\lh=\lsl_2$ are:  

\begin{enumerate}
\item $(\PSL_2,\PSL_2)$.
\item $\PSL_2$ diagonally embedded in $\PSL_2\times\PSL_2$.
\end{enumerate}
\end{lemma}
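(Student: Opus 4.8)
The plan is to carry out a dimension count on the open $H$-orbit in $\GB$.

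Since $H$ is connected with $\lh=\lsl_2$, it is $\SL_2$ or $\PSL_2$; in particular $\dim H=3$ and every maximal torus of $H$ has dimension $1$. By Assertion~\ref{ass:defequivHGB0} of Proposition~\ref{prop:defequivHGB} there is $x\in\GB$ with $H.x$ open in $\GB$ and $H_x$ containing a maximal torus of $H$, so $\dim H_x\geq 1$. On the other hand $H_x=H\cap G_x$ is contained in the Borel subgroup $G_x$ of $G$, hence is solvable, so its identity component lies in a Borel subgroup of $H$ and $\dim H_x\leq 2$. Therefore
\[
\dim\GB=\dim H.x=\dim H-\dim H_x\in\{1,2\}.
\]
As $\dim\GB$ is the number of positive roots of the semisimple group $G$, and the only semisimple root systems having exactly one (resp. two) positive roots are $A_1$ (resp. $A_1\times A_1$), and $G$ is adjoint, we get $G=\PSL_2$ if $\dim\GB=1$ and $G=\PSL_2\times\PSL_2$ if $\dim\GB=2$. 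In the first case $H\subseteq\PSL_2$ is connected of dimension $3$, hence $H=\PSL_2=G$: this is case (i).

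It remains to treat $G=\PSL_2\times\PSL_2$, where I would analyse the embedding of Lie algebras $\lh\hookrightarrow\lsl_2\oplus\lsl_2=\mathrm{Lie}(G)$. Writing $\pi_1,\pi_2$ for the two projections, the kernel of $\pi_i$ restricted to $\lh$ is an ideal of the simple Lie algebra $\lh$, hence is $0$ or $\lh$, so $\pi_i(\lh)$ is all of $\lsl_2$ or $0$. If $\pi_i(\lh)=0$ for some $i$, then $H$ acts trivially on the $i$-th factor $\PP^1$ of $\GB=\PP^1\times\PP^1$, contradicting the existence of a dense $H$-orbit; hence each $\pi_i$ restricts to an isomorphism $\lh\cong\lsl_2$, and $\lh$ is the graph of the automorphism $\theta=\pi_2\circ(\pi_1|_\lh)^{-1}$ of $\lsl_2$. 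Since $\mathrm{Aut}(\lsl_2)=\PSL_2$, we have $\theta=\mathrm{Ad}(s)$ for some $s\in\PSL_2$, and conjugating $H$ by a suitable element of $G$ replaces $\lh$ by the diagonal copy of $\lsl_2$. The connected subgroup of $G$ with that Lie algebra is the diagonal $\PSL_2\subset\PSL_2\times\PSL_2$, so up to conjugacy $H$ is this subgroup: case (ii). Conversely both pairs are spherical of minimal rank, the first obviously and the second because it realises $\PSL_2$ as a homogeneous space under $\PSL_2\times\PSL_2$ (cf. the introduction), which completes the proof.

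There is no serious obstacle in this argument; the one step needing a little care is that, in the $A_1\times A_1$ case, a dense $H$-orbit forces both projections $\lh\to\lsl_2$ to be nonzero — hence, by simplicity of $\lh$, isomorphisms — which is precisely what rigidifies the embedding to the diagonal.
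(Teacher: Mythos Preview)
Your proof is correct and follows the same route as the paper's: bound $\dim\GB$ via the open $H$-orbit and the condition that $H_x$ contains a maximal torus, deduce that $G$ is $\PSL_2$ or $\PSL_2\times\PSL_2$, and then analyse the two cases. The paper simply writes ``the lemma follows easily'' after obtaining $\dim\GB\leq 2$, whereas you have spelled out the case analysis (in particular the graph-of-an-automorphism argument for $\PSL_2\times\PSL_2$) explicitly.
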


\begin{proof}
  By Assertion~\ref{ass:defequivHGB0} of Proposition~\ref{prop:defequivHGB}, 
the dimension of $\GB$ is at most 2. 
We deduce that $G=\PSL_2$ or $\PSL_2\times\PSL_2$.
The lemma follows easily.
\end{proof}

\medskip

\begin{lemma}
\label{lem:fibrerho}
Let $\a\in\phi_\lh$.

Then, $\rhobar^{-1}(\a)$ contains  either one root  of $\lg$ or 
two orthogonal roots of $\lg$.
Moreover, if 
$\rhobar^{-1}(\a)=\{\a^\circ\}$ then $\lh_\a=\lg_{\a^\circ}$; 
and if $\rhobar^{-1}(\a)=\{\a^-,\a^+\}$ with $\a^-\neq\a^+\in\phi_\lg$
then $\lh_\a\neq\lg_{\a^\pm}$.
\end{lemma}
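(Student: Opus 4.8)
The plan is to localise at a subtorus of $H$ and combine Lemma~\ref{lem:loc} with the classification of Lemma~\ref{lem:hsl2}. Fix $\alpha\in\phi_\lh$ and set $S=(\ker\alpha)^\circ\subseteq T(H)\subseteq H$, the identity component of the kernel of $\alpha\in\Chi(T(H))$. A character $\chi\in\Chi(T(H))$ vanishes on $S$ precisely when $\chi\in\QQ\alpha$; since $\phi_\lh$ is reduced this gives $\chi\in\{\pm\alpha\}$ when $\chi$ is a root, so $\lh^S=\mathfrak{t}(H)\oplus\lh_\alpha\oplus\lh_{-\alpha}$ has semisimple rank one. For $\beta\in\phi_\lg$ one has $\beta|_S=\rho(\beta)|_S$ and $\rho(\beta)\in\phi_\lh$ by Lemma~\ref{lem:rhophi}, hence $\beta$ vanishes on $S$ iff $\rho(\beta)\in\{\pm\alpha\}$; thus the root system of $G^S$ with respect to $T$ is $\rhobar^{-1}(\alpha)\sqcup\bigl(-\rhobar^{-1}(\alpha)\bigr)$, a disjoint union since $\alpha\neq-\alpha$, and it is nonempty again by Lemma~\ref{lem:rhophi}.

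By Lemma~\ref{lem:loc}, $(G^S,H^S)$ is a spherical pair of minimal rank. Note first that any two distinct $\beta_1,\beta_2\in\rhobar^{-1}(\alpha)$ are orthogonal roots of $\lg$: we have $\beta_1\neq\pm\beta_2$, and neither $\beta_1+\beta_2$ nor $\beta_1-\beta_2$ is a root of $\lg$, their $\rho$-images being $2\alpha\notin\phi_\lh$ and $0\notin\phi_\lh$; the root-string argument then forces orthogonality. Consequently $\rhobar^{-1}(\alpha)$ spans a subsystem of type $A_1^{\,n}$ with $n=|\rhobar^{-1}(\alpha)|$, so $[\lg^S,\lg^S]\cong\lsl_2^{\,n}$. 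Now apply Proposition~\ref{prop:redHssc} to replace $(G^S,H^S)$ by the corresponding pair with semisimple adjoint first factor and connected second factor, whose second factor has Lie algebra $\langle\lh_\alpha,\lh_{-\alpha}\rangle\cong\lsl_2$: by Lemma~\ref{lem:hsl2} this pair is $(\PSL_2,\PSL_2)$ or $(\PSL_2\times\PSL_2,\PSL_2^{\rm diag})$. Hence $n\in\{1,2\}$, with $[\lh^S,\lh^S]=[\lg^S,\lg^S]$ if $n=1$ and $[\lh^S,\lh^S]$ the diagonal $\lsl_2$ in $[\lg^S,\lg^S]\cong\lsl_2\oplus\lsl_2$ if $n=2$.

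Finally I locate $\lh_\alpha$ in $\lg$. The weight-$\alpha$ eigenspace of $\lg$ under $T(H)$ is $\bigoplus_{\beta\in\rhobar^{-1}(\alpha)}\lg_\beta$, and it contains the line $\lh_\alpha$. If $\rhobar^{-1}(\alpha)=\{\alpha^\circ\}$ this eigenspace is the line $\lg_{\alpha^\circ}$, so $\lh_\alpha=\lg_{\alpha^\circ}$ with no further argument. If $\rhobar^{-1}(\alpha)=\{\alpha^-,\alpha^+\}$ with $\alpha^-\neq\alpha^+$ (two orthogonal roots of $\lg$ by the paragraph above), then $\lh_\alpha$ is a line in $\lg_{\alpha^-}\oplus\lg_{\alpha^+}$, which is the weight-$\alpha$ eigenspace of $[\lg^S,\lg^S]\cong\lsl_2\oplus\lsl_2$; since $[\lh^S,\lh^S]$ is the diagonal $\lsl_2$ it projects isomorphically onto each summand, so $\lh_\alpha$ has nonzero image in both $\lg_{\alpha^-}$ and $\lg_{\alpha^+}$ and therefore equals neither, i.e. $\lh_\alpha\neq\lg_{\alpha^\pm}$.

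I expect the main difficulty to be the bookkeeping in the reduction step: $G^S$ need not be semisimple adjoint and $H^S$ is neither connected nor semisimple, so one must verify that the operations of Proposition~\ref{prop:redHssc} leave unchanged both the root system $\pm\rhobar^{-1}(\alpha)$ of $\lg^S$ and the inclusion $[\lh^S,\lh^S]\subseteq[\lg^S,\lg^S]$, so that Lemma~\ref{lem:hsl2} genuinely pins down $n$ and the position of the diagonal $\lsl_2$. The remaining steps are routine weight-space computations.
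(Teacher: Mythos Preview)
Your proof is correct and follows the same strategy as the paper: set $S=(\ker\alpha)^\circ$, invoke Lemma~\ref{lem:loc}, and reduce to the rank-one classification of Lemma~\ref{lem:hsl2}. The paper's version is a terse two lines (it quotients directly by $S$ rather than passing to the adjoint via Proposition~\ref{prop:redHssc}, and leaves the ``moreover'' clauses implicit), whereas you spell out the weight-space bookkeeping and supply an independent root-string argument for orthogonality; the two routes are equivalent.
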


\begin{proof}
  Set $S=\ker(\a)^\circ$.
Since $\lh^S\simeq \lsl_2$ and by Lemma~\ref{lem:loc}, 
we can apply Lemma~\ref{lem:hsl2} to $(G^S/S,H^S/S)$.
The lemma follows immediately.
\end{proof}\\

Lemma~\ref{lem:fibrerho} divides the set of roots of $\lh$ in two kinds:

\begin{center}
\begin{tabular*}{0.8\textwidth}{{@{\extracolsep{\fill}}ccc}}
$
\phi_\lh^1:=\{\a\in\phi_\lh\;:\;|\rhobar^{-1}(\a)|=1\}
$ &and &
$
\phi_\lh^2:=\{\a\in\phi_\lh\;:\;|\rhobar^{-1}(\a)|=2\}.
$
\end{tabular*}
\end{center}

We denote by $W_H$ the Weyl group $N_H(\TH)/\TH$ of $H$.

\begin{lemma}
\label{lem:Winv}
  The sets $\phi_\lh^1$ and $\phi_\lh^2$ are stable by the action of $W_H$.
\end{lemma}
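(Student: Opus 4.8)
The plan is to show that the partition $\phi_\lh=\phi_\lh^1\sqcup\phi_\lh^2$ is intrinsic enough to be preserved under conjugation of $H$, hence under the Weyl group $W_H$. First I would fix $n\in N_H(\TH)$ and let $w\in W_H$ be its class; I want to prove that for $\a\in\phi_\lh$ one has $|\rhobar^{-1}(w\a)|=|\rhobar^{-1}(\a)|$. The key observation is that conjugation by $n$ is an automorphism of $H$ that stabilizes $\TH$ and induces $w$ on $\Chi(\TH)$, but it does \emph{not} in general come from an inner automorphism of $G$ preserving $T$; so the naive ``transport of structure'' argument does not immediately apply. Instead I would pass through the torus $S=\ker(\a)^\circ$ and the local description already obtained: by Lemma~\ref{lem:loc} the pair $(G^S,H^S)$ is a spherical pair of minimal rank, and Lemma~\ref{lem:fibrerho} tells us that $|\rhobar^{-1}(\a)|$ is $1$ or $2$ according to whether, in the rank-one situation $(G^S/S,H^S/S)$, we are in case (i) or case (ii) of Lemma~\ref{lem:hsl2} — equivalently according to whether the semisimple rank of $G^S$ is $1$ or $2$ (i.e. $\dim\GB(G^S)=1$ or $2$).

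So the real content is: the number $\dim\GB(G^S)$, where $S=\ker(\a)^\circ$, depends only on the $W_H$-orbit of $\a$. To see this I would use that $n\in H\subset G$ conjugates $S=\ker(\a)^\circ$ to $S'=\ker(w\a)^\circ$; conjugation by $n$ then carries $G^S$ isomorphically onto $G^{S'}$ and $H^S$ onto $H^{S'}$, compatibly with the inclusions. In particular $\dim\GB(G^{S})=\dim\GB(G^{S'})$. Hence $\a\in\phi_\lh^1\iff\dim\GB(G^S)=1\iff\dim\GB(G^{S'})=1\iff w\a\in\phi_\lh^1$, and likewise for $\phi_\lh^2$. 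Thus both sets are $W_H$-stable.

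Concretely, the steps in order are: (1) recall from Lemmas~\ref{lem:loc}, \ref{lem:hsl2} and \ref{lem:fibrerho} the reformulation ``$\a\in\phi_\lh^1$ iff the semisimple rank of $G^{\ker(\a)^\circ}$ equals $1$''; (2) given $w\in W_H$, lift it to $n\in N_H(\TH)$ and set $S=\ker(\a)^\circ$, $S'=\ker(w\a)^\circ=nSn^{-1}$ (here one checks $\ker(w\a)^\circ = n\ker(\a)^\circ n^{-1}$ directly, since $w\a$ is the character $t\mapsto\a(n^{-1}tn)$); (3) note $\mathrm{Ad}(n)$ maps $\lg^S$ to $\lg^{S'}$ and $\lh^S$ to $\lh^{S'}$, so $G^S\cong G^{S'}$ as a group containing the image of $H^S$, whence $\dim\GB(G^S)=\dim\GB(G^{S'})$; (4) conclude $\a$ and $w\a$ lie in the same piece.

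The main obstacle is step (2)–(3): one must be careful that conjugation by $n\in H$ really does identify $(G^S,H^S)$ with $(G^{S'},H^{S'})$ as \emph{pairs} — this is fine because $n$ normalizes nothing problematic, $n S n^{-1}=S'$ by construction, and conjugation by $n$ is an automorphism of the ambient $G$ restricting to an automorphism of $H$, so it carries fixed-point subgroups to fixed-point subgroups of the conjugated torus. Once that identification is in place the invariance of $\dim\GB(G^S)$, and hence of the partition, is immediate. An alternative, even shorter route: simply invoke that $\rhobar$ is $W_H$-equivariant for the natural $W_H$-action on $\phi_\lg$ induced by $N_H(\TH)\subset N_G(T')$ for a suitable $T'$ — but spelling this out requires choosing compatible maximal tori and is morally the same computation, so I would present the fixed-point-subgroup argument above as the cleanest.
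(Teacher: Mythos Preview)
Your argument is correct, but the paper takes precisely the ``shorter route'' you sketch at the end and then set aside. The observation you are missing is that applying Lemma~\ref{lem:loc} with $S=\TH$ itself shows that $(G^{\TH},\TH)$ is spherical of minimal rank; since $\TH$ is a maximal torus of itself, this forces $G^{\TH}$ to be a torus, hence $G^{\TH}=T$. Consequently any $n\in N_H(\TH)$ normalizes $\TH$ and therefore normalizes its centralizer $G^{\TH}=T$, so $N_H(\TH)\subset N_G(T)$ and $W_H$ embeds in $W$. The restriction map $\rho:\Chi(T)\to\Chi(\TH)$ is then $W_H$-equivariant, and invariance of the fiber cardinalities of $\rhobar$ is immediate. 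There is no ``suitable $T'$'' to choose: the compatible maximal torus is $T$ itself, and your worry that conjugation by $n$ might not preserve $T$ is resolved by $T=G^{\TH}$.

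Your approach via conjugating the codimension-one subtori $S=\ker(\a)^\circ$ is the same transport-of-structure carried out one root at a time, and it works; the paper's global version is shorter and has the side benefit of exhibiting the embedding $W_H\hookrightarrow W$, which is used elsewhere (for instance in the bijection $\H(\GB)\simeq W/W_H$).
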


\begin{proof}
By Lemma~\ref{lem:loc}, $(G^\TH,\TH)$ is a spherical pair of minimal rank.
So, $G^\TH$ is a torus and $G^\TH=T$.
In particular, $N_H(\TH)$ is contained in $N_G(T)$; this inclusion
induces an injection of $W_H=N_H(\TH)/\TH$ into $W=N_G(T)/T$.
By this injection, we obtain an action of $W_H$ on $\Chi(T)$ such that 
$\rho$ is $W_H$-equivariant.
The lemma follows.
\end{proof}

\subsubsection{Simple roots}

In Section~\ref{sec:phigh}, we just proved that $\rho$ induces a map from
$\phi_\lh$ onto $\phi_\lh$. In this section, we will prove that $\rho$
induces a map from the Dynkin diagram of $\lg$ onto those of $\lh$.

Let us fix a choice $\phi_\lh^+$ of positive roots for $\lh$.
Set $\phi_\lg^+=\rhobar^{-1}(\phi_\lh^+)$. 
Let $\Delta_\lg$ (resp. $\Delta_\lh$) be the set of simple roots of $\phi_\lg$
(resp. $\phi_\lh$).

\begin{lemma}
\label{lem:simpleroot}
Let $\a$ be a root of $\lg$.
Then, $\a\in \Delta_\lg$ if and only if $\rhobar(\a)\in\Delta_\lh$.
\end{lemma}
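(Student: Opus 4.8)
The plan is to exploit the fact, already established in Lemma~\ref{lem:rhophi} and the discussion following it, that $\rhobar$ is a surjection of root systems compatible with the $W_H$-action (Lemma~\ref{lem:Winv}), and that $\phi_\lg^+$ was \emph{defined} as $\rhobar^{-1}(\phi_\lh^+)$. The ``only if'' direction is the substantive one; the ``if'' direction should follow by a symmetric or complementary argument once the key structural fact is in hand.

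For the ``only if'' direction, suppose $\a\in\Delta_\lg$ but $\b:=\rhobar(\a)\notin\Delta_\lh$. Then $\b=\gamma_1+\gamma_2$ with $\gamma_1,\gamma_2\in\phi_\lh^+$. First I would lift this to $\lg$: pick $\gamma_1^\circ,\gamma_2^\circ\in\phi_\lg^+$ with $\rhobar(\gamma_i^\circ)=\gamma_i$. The issue is that $\gamma_1^\circ+\gamma_2^\circ$ need not be a root of $\lg$ and, even if it is, need not equal $\a$; this is where the minimal-rank hypothesis must be used, presumably via Lemma~\ref{lem:loc} and Lemma~\ref{lem:fibrerho}. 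The cleanest route is to localize: let $S$ be a suitable subtorus of $T(H)$ (for instance the connected kernel of a well-chosen family of roots, or of $\b$ itself composed with the other simple roots) so that in $G^S/S$ one is reduced to a rank-$2$ or rank-$3$ situation where $\lh^S$ has $\b$ as a positive non-simple root. Then I would argue directly in that small reductive pair, using the classification-type constraints: a positive root of $\lg^S$ restricting to $\b$ must be a sum of the $\rhobar$-preimages of the simple roots of $\lh^S$ below $\b$, forcing $\a$ itself to be non-simple in $\phi_{\lg^S}$, hence (by compatibility of the orderings, since $\phi_\lg^+=\rhobar^{-1}(\phi_\lh^+)$) non-simple in $\phi_\lg$, a contradiction.

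For the converse, suppose $\rhobar(\a)=\b\in\Delta_\lh$ but $\a=\a_1+\a_2$ with $\a_1,\a_2\in\phi_\lg^+$. Applying $\rhobar$ gives $\b=\rhobar(\a_1)+\rhobar(\a_2)$ with both summands in $\phi_\lh^+$ by definition of $\phi_\lg^+$; but a simple root cannot be a sum of two positive roots, so $\rhobar(\a_1)+\rhobar(\a_2)$ cannot be a root of $\lh$ --- yet it must be, as it equals $\b$. Wait: the only way out is $\rhobar(\a_1)=0$ or $\rhobar(\a_2)=0$, which is impossible since $\rho$ maps roots to roots by Lemma~\ref{lem:rhophi}. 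So this direction is essentially immediate from the definition $\phi_\lg^+=\rhobar^{-1}(\phi_\lh^+)$ together with surjectivity of $\rhobar$ onto $\phi_\lh$.

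The main obstacle is the ``only if'' direction: controlling, in the localized rank-$\le 3$ pair, exactly which positive roots of $\lg^S$ restrict onto the given non-simple positive root $\b$ of $\lh^S$, and ensuring the decomposition of $\b$ into simple roots of $\lh^S$ pulls back to a genuine decomposition of $\a$. I expect one must invoke Lemma~\ref{lem:fibrerho} to know each $\rhobar$-fibre is a single root or two orthogonal roots, and then do a short case analysis on the possible rank-$2$ spherical pairs of minimal rank (which by Lemmas~\ref{lem:hsl2} and the surrounding results are already quite restricted) to rule out a simple $\a$ mapping to a non-simple $\b$.
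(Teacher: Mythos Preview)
Your ``if'' direction (that $\rhobar(\a)$ simple forces $\a$ simple) is correct and matches the paper, which phrases it as the contrapositive: if $\a=\beta+\gamma$ with $\beta,\gamma\in\phi_\lg^+$, then $\rho(\a)=\rho(\beta)+\rho(\gamma)$ is a sum of two positive roots of $\lh$, hence not simple. (Your mid-paragraph worry about whether the sum ``can be a root'' is a red herring: it \emph{is} the root $\b$; the contradiction is simply that a simple root is never a sum of two positive roots.)

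For the ``only if'' direction your plan has a genuine gap. Reducing to a rank-$2$ pair via Lemma~\ref{lem:loc} is legitimate, but you then propose to finish by a ``short case analysis on the possible rank-$2$ spherical pairs of minimal rank.'' The only such classification in the paper is Lemma~\ref{lem:rkH2}, and its proof already uses the map $\rhodelta$ and the colored Dynkin diagram set-up, both of which rest on the present lemma---so invoking it would be circular. Without that classification your localization merely reduces the statement to the same statement in rank~$2$, and you give no indication of how to settle that case independently.

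The paper avoids localization altogether and argues by a direct Lie-bracket computation. Suppose $\rho(\a)=\beta+\gamma$ with $\beta,\gamma\in\phi_\lh^+$; then $[\lh_\beta,\lh_\gamma]=\lh_{\rho(\a)}\neq 0$. The key is that Lemma~\ref{lem:fibrerho} says \emph{precisely} how $\lh_\beta$ and $\lh_\gamma$ sit inside $\lg$: either $\lh_\beta=\lg_{\beta^\circ}$ for the unique preimage $\beta^\circ$ (when $\beta\in\phi_\lh^1$), or $\lh_\beta$ is a line in $\lg_{\beta^+}\oplus\lg_{\beta^-}$ distinct from both factors (when $\beta\in\phi_\lh^2$). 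One then splits into three cases according to which of $\beta,\gamma$ lie in $\phi_\lh^1$ or $\phi_\lh^2$. In each case the bracket produces explicit nonzero root spaces $\lg_{\beta'+\gamma'}$ with $\beta',\gamma'\in\phi_\lg^+$ lying over $\rho(\a)$; since the fibre $\rhobar^{-1}(\rho(\a))$ has at most two elements, $\a$ is forced to equal one of these sums, contradicting its simplicity. This bracket argument, exploiting the second clause of Lemma~\ref{lem:fibrerho} (the identification or non-identification of $\lh_\a$ with $\lg_{\a^\circ}$ or $\lg_{\a^\pm}$), is the ingredient your plan is missing.
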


\begin{proof}
Since $\a\in\phi_\lg^+$ if and only if   $\rho(\a)\in\phi_\lh^+$, 
we may assume that $\a\in\phi_\lg^+$.\\

Let us assume that $\a\not\in\Delta_\lg$.
Then, there exists $\beta$ and $\gamma$ in $\phi_\lg^+$ such that
$\a=\beta+\gamma$.
By applying $\rho$, we see that $\rho(\a)$ does not lie in $\Delta_\lh$.\\

Let us assume that $\a\in\Delta_\lg$.
By absurd, we assume that there exists $\beta$ and $\gamma$ in 
$\phi_\lh^+$ such that $\rho(\a)=\beta+\gamma$.
Three cases occurs:

\noindent
\underline{Case 1}: $\beta$ and $\gamma$ belong to $\phi_\lh^1$.

Let $\beta^\circ$ and $\gamma^\circ$ be in $\phi_\lg$ such that 
$\rho(\beta^\circ)=\beta$ and $\rho(\gamma^\circ)=\gamma$.
By Lemma~\ref{lem:hsl2}, 
$\lg_{\beta^\circ}=\lh_\beta$ and $\lg_{\gamma^\circ}=\lh_\gamma$.
So, we have:
$$
[\lg_{\beta^\circ},\,\lg_{\gamma^\circ}]=
[\lh_\beta,\,\lh_\gamma]=\lh_{\rho(\a)}.
$$
In particular this bracket is non zero and $\beta^\circ+\gamma^\circ$ is 
a root of $\lg$.
Moreover, $\lg_{\beta^\circ+\gamma^\circ}=\lh_{\rho(\a)}$. 
So, Lemma~\ref{lem:hsl2} shows that $\rho(\a)\in\phi_\lh^1$.
But $\a$ and $\beta^\circ+\gamma^\circ$ belong to 
$\rhobar^{-1}(\rho(\a))$. 
So, $\a=\beta^\circ+\gamma^\circ$; and this root is  not simple.

\noindent
\underline{Case 2}: $\beta\in\phi_\lh^1$ and $\gamma\in\phi_\lh^2$.

Let $\beta^\circ$ as above. 
We can write $\rhobar^{-1}(\gamma)=\{\gamma^+,\,\gamma^-\}$.
We have:
$$
\lg_{\beta^\circ+\gamma^+}+\lg_{\beta^\circ+\gamma^-}\supset
[\lg_{\beta^\circ},\,\lg_{\gamma^+}+\lg_{\gamma^-}]\supset
[\lg_{\beta^\circ},\,\lh_\gamma]=
[\lh_{\beta},\,\lh_\gamma]=
\lh_{\beta+\gamma}=\lh_{\rho(\a)}.
$$
Moreover, since $\lh_\gamma$ is different from $\lg_{\gamma^+}$ and 
$\lg_{\gamma^-}$, $\lh_{\rho(\a)}$ is different 
from $\lg_{\beta^\circ+\gamma^+}$ and $\lg_{\beta^\circ+\gamma^-}$.
In particular, $\beta^\circ+\gamma^+$ and $\beta^\circ+\gamma^-$ are roots of 
$\lg$; 
and $\rhobar^{-1}(\rho(\a))=\{\beta^\circ+\gamma^+,\,\beta^\circ+\gamma^-\}$. 
So, $\a=\beta^\circ+\gamma^+$ or $\beta^\circ+\gamma^-$; and this root is not simple.

\noindent
\underline{Case 3}: $\beta$ and $\gamma$ belong to $\phi_\lh^2$.

With obvious notation, we have:
$$
\lg_{\beta^++\gamma^+}+
\lg_{\beta^++\gamma^-}+
\lg_{\beta^-+\gamma^+}+
\lg_{\beta^-+\gamma^-}
\supset
[\lh_\beta,\,\lh_\gamma]=\lh_{\beta+\gamma}.
$$

If $\lh_{\beta+\gamma}$ equals one of the four spaces 
$\lg_{\beta^\pm +\gamma^\pm}$, Lemma~\ref{lem:fibrerho} shows that 
$\alpha$ equals $\beta^\pm +\gamma^\pm$ and is not a simple root.
Else, two of the four spaces $\lg_{\beta^\pm +\gamma^\pm}$ are not zero and
$\alpha$ equals one of the two corresponding roots; in particular 
 $\a$ is not simple.
\end{proof}\\

Consider the map
$$
\begin{array}{cccc}
\rhodelta\,:&\Delta_\lg&\longto&\Delta_\lh\\
&\a&\longmapsto&\rhobar(\alpha).
\end{array}
$$
Set $\Delta_\lh^2=\Delta_\lh\cap\phi_\lh^2$ and  
$\Delta_\lh^1=\Delta_\lh\cap\phi_\lh^1$.

On the Dynkin diagram of $\lh$, we color in black the simple roots 
in $\Delta^2\lh$. The so obtained diagram is called the {\it colored
Dynkin diagram of $\lh$} and is denoted by $\dyn_\lh$.
From now on, when we draw the Dynkin diagram $\dyn_\lg$ of $\lg$,
two simple roots $\a$ and $\beta$ are placed on the same vertical line
if and only if $\rhodelta(\a)=\rhodelta(\beta)$; in such a way,
$\rho_\Delta$ identifies with the vertical projection.
Note that by Lemma~\ref{lem:fibrerho}, $\a$ and $\beta$ are orthogonal.
For $({\rm PSL}_4,{\rm PSp_4})$, we obtain the following picture:

\begin{center}
  \begin{pspicture}(-1,-3.5)(3.5,0.5)
\psset{labelsep=0.4}
\cnode(0,0){0.2}{A}
\cnode(2.5,-1){0.2}{B}
\cnode(0,-2){0.2}{C}
\ncline{A}{B}
\ncline{B}{C}
\uput[r](3.5,-1){$\dyn_\lg$}

\cnode[fillstyle=solid](0,-4){0.2}{D}
\cnode(2.5,-4){0.2}{E}
\ncline[offset=0.07]{E}{D}
\ncline[offset=-0.07]{E}{D}
\ncline[nodesep=1pt,linecolor=white]{D}{E}\lput{:U}{{\supdiag}}
\uput[r](3.5,-4){$\dyn_\lh$}
  \end{pspicture}
\end{center}

By exchanging the simple roots in a fiber of $\rho_\Delta$, we define an 
involution $\sigma_\lh$ on the set of vertices of the Dynkin diagram of $\lg$.

\subsection{A result of unicity}

\begin{prop}
\label{prop:unicity}
For a fixed pair $(\dyn_\lg,\sigma_\lh)$ there exists at most one
(up to conjugacy by an element of $G$)  
spherical pair $(G,H)$ where $G$ is adjoint and $H$ connected.
\end{prop}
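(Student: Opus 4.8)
The plan is to reconstruct the pair $(G,H)$ from the combinatorial datum $(\dyn_\lg,\sigma_\lh)$ by first recovering the embedding of root data and then lifting to groups. Since $G$ is adjoint, it is determined up to isomorphism by $\dyn_\lg$, so fix $G$ with maximal torus $T$ and the corresponding root system $\phi_\lg\subset\Chi(T)$, positive system $\phi_\lg^+$ and simple roots $\Delta_\lg$. The colored Dynkin diagram $\dyn_\lh$ is obtained from $\dyn_\lg$ by the vertical projection $\rho_\Delta$ (whose fibers are exactly the $\sigma_\lh$-orbits), so $\phi_\lh$, $\Delta_\lh$, the partition $\Delta_\lh=\Delta_\lh^1\sqcup\Delta_\lh^2$ and hence $\lh$ as an abstract Lie algebra are all determined. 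First I would show that the restriction map $\rho\,:\,\Chi(T)\longto\Chi(T(H))$ is forced: by Lemma~\ref{lem:rhophi} and Lemma~\ref{lem:fibrerho} the map $\rhobar$ identifies $\phi_\lh$ with the set of $\sigma_\lh$-orbits on $\phi_\lg$, and $\sigma_\lh$ is determined on $\Delta_\lg$ by the datum, hence on all of $\phi_\lg$ since $\sigma_\lh$ is induced by an element of $W$ normalizing $\Delta_\lg$ (via the injection $W_H\hookrightarrow W$ of Lemma~\ref{lem:Winv}, together with Lemma~\ref{lem:simpleroot}). Because $\phi_\lg$ spans $\Chi(T)\otimes\QQ$ and $\rho$ is $W_H$-equivariant with $\rho(\a)=\rho(\sigma_\lh\a)$, the map $\rho$ on $\Chi(T)$ is the canonical projection onto the coinvariants modulo the relations $\a=\sigma_\lh\a$; this pins down $T(H)\subset T$ up to the action of $N_G(T)$.

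Next I would fix a pinning (Chevalley generators) $\{e_\a,f_\a\,:\,\a\in\Delta_\lg\}$ of $\lg$ compatible with $T$ and build $\lh$ inside $\lg$ explicitly. For $\a\in\Delta_\lh^1$ with $\rhobar^{-1}(\a)=\{\a^\circ\}$, Lemma~\ref{lem:fibrerho} gives $\lh_\a=\lg_{\a^\circ}$, so the root vector is the pinning vector $e_{\a^\circ}$ (up to scalar). For $\a\in\Delta_\lh^2$ with $\rhobar^{-1}(\a)=\{\a^+,\a^-\}$ orthogonal, the space $\lh_\a$ is a line in $\lg_{\a^+}\oplus\lg_{\a^-}$ different from both factors; I would argue that, after conjugating by an element of the torus $T$ (which acts on $\lg_{\a^+}\oplus\lg_{\a^-}$ through the two distinct characters $\a^+,\a^-$ and hence rescales the two summands independently), one may normalize this line to be the diagonal $\KK(e_{\a^+}+e_{\a^-})$. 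The point here is that the freedom in choosing the root vectors of $\lh$ is exactly absorbed by conjugation by $T$ together with rescaling the Chevalley generators of $\lg$, so there is essentially no moduli. Then $\lh$ is generated as a Lie algebra by these chosen $e_\a,f_\a$ for $\a\in\Delta_\lh$ together with $\lh\cap\lt=\Ker\rho$; by Serre's presentation this subalgebra is determined once the Cartan integers of $\phi_\lh$ are known, which they are.

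Finally I would pass from $\lh$ to $H$: since $H$ is assumed connected, $H$ is the connected subgroup of $G$ with Lie algebra $\lh$, hence is uniquely determined by $\lh\subset\lg$. Conjugating $\lh$ into the ``standard'' position above is achieved by an element of $G$ (an inner automorphism), and the residual ambiguity — the choice of $T$, of $T(H)$, of the pinning, and of the normalization in the $\Delta_\lh^2$ case — is in every instance realized by conjugation by an element of $N_G(T)$ or of $T$, i.e.\ by an element of $G$. Hence any two spherical pairs of minimal rank with the same $(\dyn_\lg,\sigma_\lh)$, with $G$ adjoint and $H$ connected, are $G$-conjugate. The main obstacle I anticipate is the $\Delta_\lh^2$ normalization step: one must check that the single torus element used to diagonalize one pair $(\a^+,\a^-)$ can be chosen simultaneously for all colored simple roots without conflicting with the already-fixed pinning on $\Delta_\lh^1$, and that the resulting generators actually satisfy the Serre relations of $\phi_\lh$ (equivalently, that the structure constants work out); this is where the orthogonality of $\a^+$ and $\a^-$ from Lemma~\ref{lem:fibrerho} and a careful bookkeeping of which pinning scalars remain free is essential.
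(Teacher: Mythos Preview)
Your approach is essentially the paper's. The paper resolves the simultaneous-normalization obstacle you flag by noting that, since $G$ is adjoint, $\Delta_\lg$ is a basis of $\Chi(T)$, so the homomorphism $\Theta:T\to(\KK^*)^{\Delta_\lh^2}$, $t\mapsto(\a^+(t)\a^-(t)^{-1})_{\a\in\Delta_\lh^2}$, is surjective and a single $t\in T$ normalizes all the diagonals at once; this also identifies $T(H)$ with the identity component of $\Ker\Theta$, bypassing your coinvariant description. One point you pass over: after fixing $\lh_{\a}=\KK(e_{\a^+}+e_{\a^-})$ for $\a\in\Delta_\lh^2$, the negative root line $\lh_{-\a}=\KK(f_{\a^-}+y\,f_{\a^+})$ is not automatically normalized by the same conjugation (conjugation by $t$ scales $e_\beta$ and $f_\beta$ inversely); the paper forces $y=1$ by computing $[e_{\a^-}+e_{\a^+},\,f_{\a^-}+y\,f_{\a^+}]=h_{\a^-}+y\,h_{\a^+}$, which must lie in the Lie algebra of $T(H)$, and evaluating $\a^--\a^+$ on it. No Serre-relations check is needed: $\lh$ is simply the subalgebra generated by the listed vectors, hence determined.
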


\begin{proof}
Obviously, $G$ is determined by $\dyn_\lg$.
Let us fix a Borel subgroup $B$ of $G$ and a maximal torus $T$ of $B$.
Let $\Delta_\lg$ denote the set of simple roots of $G$. 
For any $\a\in\Delta_\lg$, we fix a $\lsl_2$-triple $(X_\a,Y_\a,H_\a)$.
Consider 
$$
\begin{array}{cccl}
\Theta\,:&T&\longto&(\KK^*)^{\Delta_\lh^2}\\
&t&\longmapsto&(\beta(t)\alpha(t^{-1}))_{
  \begin{array}{l}
\alpha\neq\beta\in\Delta_\lg\\
\rhodelta(\alpha)=\rhodelta(\beta)
  \end{array}
}.
\end{array}
$$
The neutral component $S$ of the kernel of $\Theta$ 
is a subtorus of $T$ of dimension 
$|\Delta_\lg|-|\Delta^2_\lh|=|\Delta_\lh|$.
Moreover, $\Theta$ is surjective.

Let $H$ be a semi-simple subgroup of $G$ such that $(G,H)$ is a 
spherical pair of minimal rank with $(\dyn_\lg,\dyn_\lh,\rhodelta)$ 
as associated triple.
Let $\TH$ be a maximal torus of $G$.
Up to conjugacy, we may assume that $\TH$ is contained in $T$.
But, $\TH$ is contained in $S$; by an argument of dimension,
we conclude that $\TH=S$.

For all $\alpha\in\Delta_\lg^1$, we have
$\lg_\a=\lh_{\rho(\a)}$. Moreover, 
we claim that up to conjugacy, we may assume that  
for all  $\alpha\neq\beta\in\Delta_\lg$ 
such that $\rhodelta(\alpha)=\rhodelta(\beta)$ we have
$\lh_{\rho(\a)}=\KK.(X_\a+X_\beta)$.

We write 
$\Delta^2_\lh=\{\a_1,\cdots,\a_k\}$ and
$\Delta^2_\lg=\{\a_1^-,\cdots,\a_k^-\}\cup\{\a_1^+,\cdots,\a_k^+\}$
such that for all $i=1,\cdots,k$, $\rho(\a_i^\pm)=\alpha_i$.
By Lemma~\ref{lem:fibrerho}, there exist $(x_1,\cdots,x_k)\in\KK^*$ such that 
for all $i=1,\cdots,k$, 
$\lh_{\a_i}=\KK.(X_{\a_i^-}+x_i X_{\a_i^+})$.
Since $\Theta$ is surjective, there exists $t\in T$ such that 
$\Theta(t)=(x_1,\cdots,x_k)$.
By conjugating $H$ by $t$, we obtain the claim.

Let $i\in\{1,\cdots,k\}$.
There exists $y\in\KK^*$ such that 
$\lh_{-\a_i}=\KK.(Y_{\a_i^-}+y Y_{\a_i^+})$.
Since $\a_i^-$ and $\a_i^+$ are orthogonal, 
$\xi:=[X_{\a_i^-}+X_{\a_i^+},Y_{\a_i^-}+y Y_{\a_i^+}]=
H_{\a_i^-}+yH_{\a_i^+}$.
But, $\xi$ belongs to the Lie algebra of $\TH$, that is to $S$, so
$(\a_i^--\a_i^+)(\xi)=0$. We conclude that $y=1$.

Finally, since $\lh$ is generated as Lie algebra by the $\lh_{\pm\alpha}$ for $\alpha\in\Delta_\lh$; $\lh$  is generated by:
$$
\{X_\alpha\}_{\alpha\in\Delta^1_\lg}\cup \{Y_\alpha\}_{\alpha\in\Delta^1_\lg}
\cup 
\{X_{\a_i^-}+X_{\a_i^+}\}_{\alpha\in\Delta^2_\lh}\cup
\{Y_{\a_i^-}+Y_{\a_i^+}\}_{\alpha\in\Delta^2_\lh}.
$$
In particular, $\lh$ only depends on the triple 
$(\dyn_\lg,\sigma_\lh)$.
\end{proof}

\subsubsection{The case when $\rk(H)=2$}

Lemma~\ref{lem:hsl2} considers the case when $\rk(H)=1$. 
We now consider the case when $\rk(H)=2$:

\begin{lemma}
\label{lem:rkH2}
\psset{unit=0.5cm}
We assume that $\rk(H)=2$.
Then, the possibilities for $(\dyn_\lg,\sigma_\lh)$ are:
\begin{enumerate}
\item $(\dyn_\lh,{\rm Identity})$ obtained with $G=H$.
\item $(\dyn_\lh\cup\dyn_\lh,{\rm Exchange})$  obtained with $H$
embedded diagonally in $H\times H$.
\item  
$\dyn_\lg=A_3$ and $\sigma_\lh$ fixes the central vertex and exchanges the two 
others; obtained with
$H={\rm PSp}_4\subset G={\rm PSL}_4$:

\begin{pspicture}(-1,-4.5)(3.5,0.5)
\psset{labelsep=0.4}
\cnode(0,0){0.2}{A}
\cnode(2.5,-1){0.2}{B}
\cnode(0,-2){0.2}{C}
\ncline{A}{B}
\ncline{B}{C}
\uput[r](3.5,-1){$\dyn_\lg$}

\cnode[fillstyle=solid](0,-4){0.2}{D}
\cnode(2.5,-4){0.2}{E}
\ncline[offset=0.07]{E}{D}
\ncline[offset=-0.07]{E}{D}
\ncline[nodesep=1pt,linecolor=white]{D}{E}\lput{:U}{{\supdiag}}
\uput[r](3.5,-4){$\dyn_\lh$} 
  \end{pspicture}\\ 

\item \label{diag:G2SO7}
$\dyn_\lg=B_3$ and $\sigma_\lh$ fixes the central vertex and exchanges the two 
others; obtained with
$G_2\subset {\rm SO}_7$:

\begin{pspicture}(-1,-4.5)(3.5,0.5)
\psset{labelsep=0.4}
\cnode(0,0){0.2}{A}
\cnode(2.5,-1){0.2}{B}
\cnode(0,-2){0.2}{C}
\ncline[offset=0.07]{A}{B}
\ncline[offset=-0.07]{A}{B}
\ncline[nodesep=1pt,linecolor=white]{A}{B}\lput{:U}{{\supdiag}}
\ncline{B}{C}
\uput[r](3.5,-1){$\dyn_\lg$}

\cnode[fillstyle=solid](0,-4){0.2}{D}
\cnode(2.5,-4){0.2}{E}
\ncline[offset=0.09]{E}{D}
\ncline[offset=-0.09]{E}{D}
\ncline{D}{E}\lput{:U}{{\supdiag}}
\uput[r](3.5,-4){$\dyn_\lh$} 
  \end{pspicture}\\ 

\end{enumerate}
\end{lemma}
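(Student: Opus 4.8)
The plan is to run the same local analysis that produced Lemma~\ref{lem:hsl2}, but one rank higher, and then to extract the finitely many possibilities for $(\dyn_\lg,\sigma_\lh)$ combinatorially. First I would record the structural data already available: by Lemma~\ref{lem:rhophi} the restriction map $\rho$ sends $\phi_\lg$ onto $\phi_\lh$, and $\phi_\lh$ is a rank-$2$ root system, hence one of $A_1\times A_1$, $A_2$, $B_2$, $G_2$; by Lemma~\ref{lem:fibrerho} every fiber $\rhobar^{-1}(\a)$ has size $1$ or $2$, with two elements being orthogonal roots of $\lg$; by Lemma~\ref{lem:simpleroot} the preimage of $\Delta_\lh$ is exactly $\Delta_\lg$, so $\dyn_\lg$ has $|\Delta_\lg|=|\Delta^1_\lh|+2|\Delta^2_\lh|$ vertices, which is $2$, $3$ or $4$; and by Lemma~\ref{lem:Winv} the partition $\phi_\lh=\phi_\lh^1\sqcup\phi_\lh^2$ is $W_H$-stable. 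The last point is the key combinatorial lever: in $A_2$, $B_2$ and $G_2$ the Weyl group acts transitively on roots of a fixed length, so in the simply-laced case ($A_1\times A_1$, $A_2$) either all roots are of type $1$ or all of type $2$, while in $B_2$ and $G_2$ each length class is uniformly of one type.

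Next I would go through the four candidates for $\phi_\lh$. If $\phi_\lh$ has only type-$1$ roots, then $\rhobar$ is a bijection and $\rho$ identifies $\phi_\lg$ with $\phi_\lh$ compatibly with the bracket (Lemma~\ref{lem:fibrerho} gives $\lh_\a=\lg_{\rhobar^{-1}(\a)}$), forcing $\lg=\lh$ and $G=H$, which is case (i). If all roots of $\phi_\lh$ are of type $2$, then $|\phi_\lg|=2|\phi_\lh|$ and $\dyn_\lg$ is a disjoint union of two copies of $\dyn_\lh$ with $\sigma_\lh$ the exchange involution; the orthogonality in each fiber forces the two copies to commute, giving $H$ diagonal in $H\times H$, case (ii). The genuinely new cases come from $\phi_\lh$ of type $B_2$ or $G_2$, where exactly one length class is of type $2$. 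If the long roots are of type $2$ and the short ones of type $1$, one counts $|\Delta_\lg|=1+2=3$ and checks, using that a type-$2$ fiber consists of two orthogonal roots whose spaces bracket (through a short-root space, as in Case~2 of the proof of Lemma~\ref{lem:simpleroot}) to the long-root space of $\lh$, that $\dyn_\lg$ must be $A_3$ with $\sigma_\lh$ fixing the middle node and swapping the ends; by Lemma~\ref{lem:rhophi} and the known subalgebra $\lsp_4\subset\lsl_4$ this is realized by $\PSp_4\subset\PSL_4$, case (iii). If instead the short roots are of type $2$, the same count gives $|\Delta_\lg|=3$ and $\dyn_\lg=B_3$ with the analogous involution, realized by $G_2\subset\SO_7$, case (iv). One must also rule out $G_2$ itself as $\phi_\lh$ with a nontrivial type split: here the long roots form a sub-system isomorphic to $A_2$ and the short ones another $A_2$, and a dimension count $|\Delta_\lg|=1+2=3$ with the $G_2$ Dynkin diagram of $\dyn_\lh$ leads to a rank-$3$ $\dyn_\lg$ that cannot carry the required fiber structure compatibly with the $G_2$ multiplicities — I would dispose of this by the same bracket bookkeeping, or more cheaply by noting that $\rk(G)\le 4$ and that no rank-$\le 4$ simple (or semisimple adjoint) group contains a copy of $G_2$ with the needed root-restriction pattern other than those already listed.

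The main obstacle I expect is the last bookkeeping step: once the split of $\phi_\lh$ into lengths-and-types is fixed, one still has to pin down $\dyn_\lg$ exactly, and this requires controlling not just the number of simple roots of $\lg$ but their Cartan integers. The clean way to do this is to reuse the identities from the proof of Lemma~\ref{lem:simpleroot}: a type-$2$ simple root $\a\in\Delta_\lh$ has $\rhobar^{-1}(\a)=\{\a^-,\a^+\}$ with $\a^-\perp\a^+$, while a type-$1$ simple root $\beta$ has $\rhobar^{-1}(\beta)=\{\beta^\circ\}$ with $\lg_{\beta^\circ}=\lh_\beta$; applying $\rho$ to $[\lg_{\beta^\circ},\lg_{\a^\pm}]$ and comparing with $[\lh_\beta,\lh_\a]$ determines which of $\beta^\circ+\a^\pm$ are roots of $\lg$, hence the edges of $\dyn_\lg$. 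Carrying this out in the two mixed-length cases pins $\dyn_\lg$ to $A_3$ and $B_3$ respectively and fixes $\sigma_\lh$; Proposition~\ref{prop:unicity} then guarantees that the pair $(G,H)$ is unique up to conjugacy, so it suffices to exhibit one realization, which is classical in each case. This completes the list.
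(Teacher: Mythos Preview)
Your outline follows the same strategy as the paper --- enumerate the rank-$2$ root systems for $\lh$, use $W_H$-invariance to split $\phi_\lh$ by length into type~$1$ and type~$2$, count $|\Delta_\lg|$, and pin down $\dyn_\lg$ --- but the execution has two genuine gaps.

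First, the step ``all roots type~$2$ $\Rightarrow$ $\dyn_\lg=\dyn_\lh\sqcup\dyn_\lh$'' is not justified by within-fibre orthogonality alone. Knowing $\a^+\perp\a^-$ for each $\a\in\Delta_\lh$ tells you nothing about the edges between $\a^\pm$ and $\beta^\pm$ for distinct $\a,\beta$. For $\lh=\lsl_3$ with both simple roots type~$2$ one has $|\phi_\lg|=12$ and $|\Delta_\lg|=4$ with two prescribed orthogonal pairs; this is compatible not only with $A_2\times A_2$ but also with configurations such as $B_2\times A_1\times A_1$, and likewise for $\lh=G_2$ with all roots type~$2$ there is a candidate $C_4$. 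The paper lists these as separate cases and eliminates each by an ad hoc argument (no ${\rm PSL}_3$-action on $\PP^1$; a fibre of $\rhobar$ with three elements; etc.). Your sentence ``the orthogonality in each fibre forces the two copies to commute'' does not do this work.

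Second, and more seriously, the mixed-length analysis is inverted. In the realised pairs it is the \emph{short} roots of $\lh$ that are type~$2$: under $\lsp_4\subset\lsl_4$ the two outer simple roots of $A_3$ restrict to the short simple root of $C_2$, and under $G_2\subset\SO_7$ the doubled simple root of $B_3$ restricts to the short simple root of $G_2$. The ``long roots type~$2$'' subcase, which you claim yields $\PSp_4\subset\PSL_4$, is precisely what the paper must eliminate (for $B_2$: one shows $2\a+\beta\in\phi_\lh^1$ via a bracket identity, contradicting $\beta\in\phi_\lh^2$; for $G_2$: a similar computation with $3\a+\beta$). Moreover, your claim that long-type-$2$ forces $\dyn_\lg=A_3$ regardless of whether $\lh$ is $B_2$ or $G_2$ fails already on root count: for $\lh=G_2$ one has $|\phi_\lg|=6+2\cdot 6=18$, not $12$. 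You therefore need four subcases ($B_2$/$G_2$ crossed with short/long type~$2$), two of which are realised and two of which require the bracket eliminations you have not supplied; the vague appeal at the end (``no rank-$\le 4$ group contains a copy of $G_2$ with the needed pattern'') is not an argument.
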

\begin{proof}
If $\phi_\lh^2$ is empty, the dimensions of $\lg$ and $\lh$ are equal and hence
$\lg=\lh$. From now on, we assume that $\phi_\lh^2$ is not empty.
Since $\phi_\lh^2$ is stable by the action of the Weyl group $W_H$ of $H$, 
$\Delta^2_\lh$ is non empty.

By invariance by $W_H$ the possibilities for colored Dynkin diagrams
of $\lh$ are:
$$
\begin{array}{ccccc}

  \begin{pspicture}(-0.5,-0.5)(3,0.5)
\cnode[fillstyle=solid](0,0){0.2}{A}   
\cnode[fillstyle=solid](2.5,0){0.2}{B}
  \end{pspicture}
&
\begin{pspicture}(-0.5,-0.5)(3,0.5)
\cnode[fillstyle=solid](0,0){0.2}{A}   
\cnode(2.5,0){0.2}{B}
  \end{pspicture}
&
\begin{pspicture}(-0.5,-0.5)(3,0.5)
\cnode[fillstyle=solid](0,0){0.2}{A}   
\cnode[fillstyle=solid](2.5,0){0.2}{B}
\ncline{A}{B}
  \end{pspicture}&
\begin{pspicture}(-0.5,-0.5)(3,0.5)
\cnode[fillstyle=solid](0,0){0.2}{A}   
\cnode[fillstyle=solid](2.5,0){0.2}{B}
\ncline[offset=0.07]{A}{B}
\ncline[offset=-0.07]{A}{B}
\ncline[nodesep=1pt,linecolor=white]{B}{A}\lput{:U}{{\supdiag}}
  \end{pspicture}
  &\begin{pspicture}(-0.5,-0.5)(3,0.5)
\cnode[fillstyle=solid](0,0){0.2}{A}   
\cnode(2.5,0){0.2}{B}
\ncline[offset=0.07]{A}{B}
\ncline[offset=-0.07]{A}{B}
\ncline[nodesep=1pt,linecolor=white]{B}{A}\lput{:U}{{\supdiag}}
  \end{pspicture}\\
  \begin{pspicture}(-0.5,-0.5)(3,0.5)
\cnode(0,0){0.2}{A}   
\cnode[fillstyle=solid](2.5,0){0.2}{B}
\ncline[offset=0.07]{A}{B}
\ncline[offset=-0.07]{A}{B}
\ncline[nodesep=1pt,linecolor=white]{B}{A}\lput{:U}{{\supdiag}}
  \end{pspicture}&
\begin{pspicture}(-0.5,-0.5)(3,0.5)
\cnode[fillstyle=solid](0,0){0.2}{A}   
\cnode[fillstyle=solid](2.5,0){0.2}{B}
\ncline[offset=0.09]{A}{B}
\ncline[offset=-0.09]{A}{B}
\ncline{B}{A}\lput{:U}{{\supdiag}}
  \end{pspicture}
&
 \begin{pspicture}(-0.5,-0.5)(3,0.5)
\cnode(0,0){0.2}{A}   
\cnode[fillstyle=solid](2.5,0){0.2}{B}
\ncline[offset=0.09]{A}{B}
\ncline[offset=-0.09]{A}{B}
\ncline{B}{A}\lput{:U}{{\supdiag}}
  \end{pspicture} & \begin{pspicture}(-0.5,-0.5)(3,0.5)
\cnode[fillstyle=solid](0,0){0.2}{A}   
\cnode(2.5,0){0.2}{B}
\ncline[offset=0.09]{A}{B}
\ncline[offset=-0.09]{A}{B}
\ncline{B}{A}\lput{:U}{{\supdiag}}
  \end{pspicture}
\end{array}
$$

In each case, using the action of $W_H$, one can determine
$\phi_\lh^1$ and $\phi_\lh^2$ and thus, compute the cardinality
$|\phi_\lg|$ of $\phi_\lg$ which equals $|\phi^1_\lh|+2|\phi_\lh^2|$.
Moreover, by Lemma~\ref{lem:hsl2} the two simple roots of $\lg$ 
which map on a simple root of $\phi_\lh^2$ are orthogonal.
These constraints imply a small number of possibilities for 
$(\dyn_\lg,\dyn_\lh,\rhodelta)$.
In the following tabular, we list  these possibilities omitting those 
corresponding to $\lh\subset\lh\times\lh$
and $\lsl_2\times\lsl_2\to\lsl_2\times\lsl_2\times\lsl_2,
(\xi,\eta)\mapsto(\xi,\xi,\eta)$.

$$
\begin{array}{|c|c|c|c|c|c|c|}
\hline
{\rm Case}&
\lh&
\begin{array}{c}
{\rm Colored}\\{\rm Dynkin\ diagram}\\
{\rm of\ }\lh
\end{array}&
(|\phi^1_\lh|,|\phi_\lh^2|)&
|\phi_\lg|&
\begin{array}{c}
{\rm Dynkin\ diagram}\\{\rm of\ }\lg
\end{array}&
\lg\\

\hline
1&\lsl_3&
\begin{pspicture}(-0.5,-0.5)(3,0.5)
\cnode[fillstyle=solid](0,0){0.2}{A}
\cnode[fillstyle=solid](2.5,0){0.2}{B}
\ncline{A}{B}\end{pspicture}&
(0,6)&12 &\begin{pspicture}(-0.5,-2.5)(3,0.5)
\cnode(0,0){0.2}{A}\cnode(2.5,0){0.2}{B}
\cnode(0,-2){0.2}{C}\cnode(2.5,-2){0.2}{D}
\ncline{C}{D}
\ncline[offset=0.07]{A}{B}\ncline[offset=-0.07]{A}{B}
\end{pspicture}
&
\lsp_4\times\lsl_2\times\lsl_2\\
\hline
2&\lsp_4&
\begin{pspicture}(-0.5,-0.5)(3,0.5)
\cnode[fillstyle=solid](0,0){0.2}{A}   
\cnode[fillstyle=solid](2.5,0){0.2}{B}
\ncline[offset=0.07]{A}{B}
\ncline[offset=-0.07]{A}{B}
  \end{pspicture}&
(0,8)&16 &\begin{pspicture}(-0.5,-2.5)(3,0.5)
\cnode(0,0){0.2}{A}\cnode(2.5,0){0.2}{B}
\cnode(0,-2){0.2}{C}\cnode(2.5,-2){0.2}{D}
\ncline[offset=0.07]{A}{B}\ncline[offset=-0.07]{A}{B}
\ncline[offset=0.07]{C}{D}\ncline[offset=-0.07]{C}{D}
\ncline[nodesep=1pt,linecolor=white]{B}{A}\lput{:U}{{\supdiag}}
\ncline[nodesep=1pt,linecolor=white]{C}{D}\lput{:U}{{\supdiag}}
  \end{pspicture}
&\lsp_4\times\lsp_4
\\
\hline
3&\lsp_4&
\begin{pspicture}(-0.5,-0.5)(3,0.5)
\cnode[fillstyle=solid](0,0){0.2}{A}\cnode(2.5,0){0.2}{B}
\ncline[offset=0.07]{A}{B}
\ncline[offset=-0.07]{A}{B}
\ncline[nodesep=1pt,linecolor=white]{B}{A}\lput{:U}{{\supdiag}}
 \end{pspicture}&
(4,4)&12 &\begin{pspicture}(-1,-2.5)(3.5,0.5)
\psset{labelsep=0.4}
\cnode(0,0){0.2}{A}
\cnode(2.5,-1){0.2}{B}
\cnode(0,-2){0.2}{C}
\ncline{A}{B}
\ncline{B}{C}
  \end{pspicture}
&\lsl_4
\\
\hline
4&\lsp_4&
\begin{pspicture}(-0.5,-0.5)(3,0.5)
\cnode[fillstyle=solid](0,0){0.2}{A}\cnode(2.5,0){0.2}{B}
\ncline[offset=0.07]{A}{B}
\ncline[offset=-0.07]{A}{B}
\ncline[nodesep=1pt,linecolor=white]{A}{B}\lput{:U}{{\supdiag}}
 \end{pspicture}&
(4,4)&12 &\begin{pspicture}(-1,-2.5)(3.5,0.5)
\psset{labelsep=0.4}
\cnode(0,0){0.2}{A}
\cnode(2.5,-1){0.2}{B}
\cnode(0,-2){0.2}{C}
\ncline{A}{B}
\ncline{B}{C}
  \end{pspicture}
&\lsl_4
\\
\hline
5&G_2&\begin{pspicture}(-0.5,-0.5)(3,0.5)
\cnode[fillstyle=solid](0,0){0.2}{A}\cnode[fillstyle=solid](2.5,0){0.2}{B}
\ncline[offset=0.09]{A}{B}\ncline[offset=-0.09]{A}{B}\ncline{B}{A}
 \end{pspicture}
&
(0,12)&24 &\begin{pspicture}(-0.5,-2.5)(3,0.5)
\cnode(0,0){0.2}{A}\cnode(2.5,0){0.2}{B}
\cnode(0,-2){0.2}{C}\cnode(2.5,-2){0.2}{D}
\ncline[offset=0.07]{A}{B}\ncline[offset=-0.07]{A}{B}
\ncline[nodesep=1pt,linecolor=white]{A}{B}\lput{:U}{{\supdiag}}
\ncline{C}{D}\ncline{A}{D}
  \end{pspicture}
&
\lsp_8
\\
\hline
6&G_2&
\begin{pspicture}(-0.5,-0.5)(3,0.5)
\cnode[fillstyle=solid](0,0){0.2}{A}\cnode(2.5,0){0.2}{B}
\ncline[offset=0.09]{A}{B}\ncline[offset=-0.09]{A}{B}
\ncline{B}{A}\lput{:U}{{\supdiag}}
 \end{pspicture}
&
(6,6)&18 &\begin{pspicture}(-1,-2.5)(3.5,0.5)
\psset{labelsep=0.4}
\cnode(0,0){0.2}{A}
\cnode(2.5,-1){0.2}{B}
\cnode(0,-2){0.2}{C}
\ncline[offset=0.07]{A}{B}\ncline[offset=-0.07]{A}{B}
\ncline[nodesep=1pt,linecolor=white]{A}{B}\lput{:U}{{\supdiag}}
\ncline{B}{C}
  \end{pspicture}
&\lso_7
\\
\hline
7&G_2&\begin{pspicture}(-0.5,-0.5)(3,0.5)
\cnode[fillstyle=solid](0,0){0.2}{A}\cnode(2.5,0){0.2}{B}
\ncline[offset=0.09]{A}{B}\ncline[offset=-0.09]{A}{B}
\ncline{A}{B}\lput{:U}{{\supdiag}}
 \end{pspicture}
&
(6,6)&18 &
\begin{pspicture}(-1,-2.5)(3.5,0.5)
\psset{labelsep=0.4}
\cnode(0,0){0.2}{A}
\cnode(2.5,-1){0.2}{B}
\cnode(0,-2){0.2}{C}
\ncline[offset=0.07]{A}{B}\ncline[offset=-0.07]{A}{B}
\ncline[nodesep=1pt,linecolor=white]{A}{B}\lput{:U}{{\supdiag}}
\ncline{B}{C}
  \end{pspicture}&\lso_7
\\
\hline
\end{array}
$$ 

There is no nontrivial action of the group ${\rm PSL}_3$  on $\PP^1$.
But in Case~1, $\PP^1$ is a factor of $\GB$; and $H$ cannot have a dense 
orbit in $\GB$.
In Case~2, the projections of $\lh$ on each factor of $\lg$ are isomorphisms:
Case~2 also cannot occur.

Consider Case~3.
Let $\a$ (resp. $\beta$) denote the short (resp. long) simple root of $\lh$.
Set $\a^\circ=\rhodelta^{-1}(\alpha)$ and
$(\alpha+\beta)^\circ=\rhodelta^{-1}(\alpha)$.
By Lemma~\ref{lem:Winv}, $\alpha+\beta\in\phi_\lh^1$. So,
$$
\lh_{2\alpha+\beta}=[\lh_\alpha,\lh_{\a+\beta}]=
[\lg_{\alpha^\circ},\lg_{(\alpha+\beta)^\circ}]=
\lg_{\alpha^\circ+(\alpha+\beta)^\circ}.
$$
Now, Lemma~\ref{lem:fibrerho} shows that $2\alpha+\beta\in\phi_\alpha^1$.
With Lemma~\ref{lem:Winv}, this contradicts $\beta\in\phi^2_\lh$.

By elimination, the inclusion of ${\rm PSp_4}$ in ${\rm PSL}_4$ corresponds 
to Case~4.

Consider Case~5.
We label the simple roots of $\lg$ and $\lh$ as follows:
\begin{center}
  \begin{pspicture}(-1,-3.5)(3.5,0.5)
\cnode(0,0){0.2}{A}
\uput[l](0,0){$\alpha_3$}
\cnode(2.5,0){0.2}{B}
\uput[r](2.5,0){$\alpha_4$}
\cnode(0,-1.5){0.2}{C}
\uput[l](0,-1.5){$\a_1$}
\cnode(2.5,-1.5){0.2}{D}
\uput[r](2.5,-1.5){$\a_2$}
\ncline[offset=0.07]{A}{B}\ncline[offset=-0.07]{A}{B}
\ncline[nodesep=1pt,linecolor=white]{A}{B}\lput{:U}{\supdiag}
\ncline{A}{D}\ncline{C}{D}
\uput[r](3.5,-0.75){$\dyn_\lg$}

\cnode[fillstyle=solid](0,-3){0.2}{E}
\uput[l](0,-3){$\a$}
\cnode[fillstyle=solid](2.5,-3){0.2}{F}
\uput[r](2.5,-3){$\beta$}
\ncline[offset=0.09]{E}{F}\ncline[offset=-0.09]{E}{F}
\ncline{E}{F}
\uput[r](3.5,-3){$\dyn_\lh$}
  \end{pspicture}
\end{center}

We have 
$\rho(\a_3+\a_4)=\rho(\a_2+\a_1)=\rho(\a_2+\a_3)=\a+\beta$; 
but $\a_3+\a_4$, $\a_2+\a_1$ and $\a_2+\a_3$ are three distinct roots 
of $\lg$. This contradicts Lemma~\ref{lem:fibrerho}.

Consider Case~6.
Let $\a$ (resp. $\beta$) denote the short (resp. long) simple root of $\lh$.
By Lemma~\ref{lem:Winv}, $\beta+2\alpha$ belongs to $\phi^1_\lh$.
By the argument used in Case~3 before, one easily checks that 
$\beta+3\alpha=(\beta+2\alpha)+\alpha$ belongs to $\phi^1_\lh$.
This contradicts Lemma~\ref{lem:Winv}, since $\beta+3\alpha$ is a long root.

Case~7 corresponds to the inclusion of $G_2$ in ${\rm SO}_7$.
\end{proof}

\medskip
We may now assume that $H$ is simple. Indeed, we have:                            
\begin{prop}
\label{prop:redHsimple}
Let $(G,H)$ be a spherical pair of minimal rank with $G$ semi-simple adjoint 
and $H$ connected.
If $H$ is not simple then there exists two spherical pairs
$(G_1,H_1)$ and $(G_2,H_2)$ of minimal rank such that 
$G=G_1\times G_2$ and $H=H_1\times H_2$.
\end{prop}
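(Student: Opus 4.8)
The plan is to exploit the combinatorial data $(\dyn_\lg,\sigma_\lh)$ attached to $(G,H)$ in this section. Since $H$ is connected and semisimple but not simple, I would first split its Lie algebra as $\lh=\lh_1\oplus\lh_2$ with $\lh_1$ and $\lh_2$ two nonzero semisimple ideals (group the simple ideals of $\lh$ into two nonempty blocks). Then $\Delta_\lh=\Delta_{\lh_1}\sqcup\Delta_{\lh_2}$, and no vertex of $\Delta_{\lh_1}$ is joined to one of $\Delta_{\lh_2}$ in the Dynkin diagram of $\lh$. Put $\Delta_{\lg_i}=\rhodelta^{-1}(\Delta_{\lh_i})$, so that $\Delta_\lg=\Delta_{\lg_1}\sqcup\Delta_{\lg_2}$.

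The key step is to show that no edge of the Dynkin diagram of $\lg$ joins a vertex of $\Delta_{\lg_1}$ to one of $\Delta_{\lg_2}$. If $\a\in\Delta_{\lg_1}$ and $\b\in\Delta_{\lg_2}$ were joined by an edge, then $\a+\b$ would be a root of $\lg$, hence $\rho(\a)+\rho(\b)=\rho(\a+\b)$ would be a root of $\lh$ by Lemma~\ref{lem:rhophi}; but $\rho(\a)=\rhodelta(\a)\in\Delta_{\lh_1}$ and $\rho(\b)=\rhodelta(\b)\in\Delta_{\lh_2}$ lie in distinct ideals of $\lh$, so their sum is not a root -- a contradiction. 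Consequently $\dyn_\lg$ is the disjoint union of the full subdiagrams on $\Delta_{\lg_1}$ and $\Delta_{\lg_2}$, call them $\dyn_{\lg_1}$ and $\dyn_{\lg_2}$, which gives $\lg=\lg_1\oplus\lg_2$ as ideals; since $G$ is adjoint it decomposes accordingly as $G=G_1\times G_2$, with $G_i$ adjoint semisimple of Dynkin diagram $\dyn_{\lg_i}$. Moreover $\sigma_\lh$ stabilises each $\Delta_{\lg_i}$, since it permutes vertices within fibres of $\rhodelta$, so the pair $(\dyn_\lg,\sigma_\lh)$ splits as well.

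Next I would split $H$. The first point is that $\lh_i\subseteq\lg_i$, for which it suffices to prove $\rhobar^{-1}(\phi_{\lh_i})\subseteq\phi_{\lg_i}$ (the roots of $\lg_i$). Writing a root $\a'$ of $\lg$ as $\a'=\sum_{\a\in\Delta_\lg}m_\a\a$ with all $m_\a$ of one sign, and using that each $\rho(\a)$ with $\a\in\Delta_{\lg_j}$ is a simple root lying in $\Delta_{\lh_j}$ while $\Delta_{\lh_1}\cup\Delta_{\lh_2}$ is linearly independent, the condition $\rho(\a')\in\phi_{\lh_i}$ forces, by positivity of the $m_\a$, that $m_\a=0$ for every $\a\notin\Delta_{\lg_i}$; hence $\a'\in\phi_{\lg_i}$. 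By Lemma~\ref{lem:fibrerho} each root space $\lh_\gamma$ with $\gamma\in\phi_{\lh_i}$ lies in $\sum_{\a'\in\rhobar^{-1}(\gamma)}\lg_{\a'}\subseteq\lg_i$, and the coroot $[\lh_\gamma,\lh_{-\gamma}]$ lies in $\lg_i$ too; since $\lh$ is spanned by its root spaces and coroots, $\lh=(\lh\cap\lg_1)\oplus(\lh\cap\lg_2)$ and $\lh_i=\lh\cap\lg_i$. Let $H_i$ be the connected subgroup of $G_i$ with Lie algebra $\lh_i$. Then $H_1$ and $H_2$ commute and meet trivially (they sit in the two factors of $G=G_1\times G_2$), and $H_1H_2$ is connected with Lie algebra $\lh$, so $H=H_1\times H_2$.

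It remains to see that each $(G_i,H_i)$ is spherical of minimal rank, which I would deduce directly from assertion~\ref{ass:defequivHGB0} of Proposition~\ref{prop:defequivHGB}: writing $\GB=G_1/B_1\times G_2/B_2$ and choosing $x=(x_1,x_2)$ with $H.x$ open in $\GB$ and $H_x$ containing a maximal torus of $H$, one has $H.x=H_1.x_1\times H_2.x_2$, $H_x=(H_1)_{x_1}\times(H_2)_{x_2}$, and every maximal torus of $H_1\times H_2$ is of the form $T^{(1)}\times T^{(2)}$ with $T^{(i)}$ maximal in $H_i$; therefore $H_i.x_i$ is open in $G_i/B_i$ and $(H_i)_{x_i}\supseteq T^{(i)}$. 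I expect the genuine work to lie in the passage from the clean root-combinatorial splitting $\dyn_\lg=\dyn_{\lg_1}\sqcup\dyn_{\lg_2}$ to the compatible group-level splitting $H=H_1\times H_2$ (the inclusions $\lh_i\subseteq\lg_i$ and the elementary but slightly delicate bookkeeping above), rather than in the key step, which is immediate.
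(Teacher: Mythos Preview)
Your proof is correct, and the overall architecture matches the paper's: split $\dyn_\lh$ according to the ideals of $\lh$, pull back via $\rhodelta$ to split $\dyn_\lg$, and read off the group decomposition. The difference lies in the key step.

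The paper shows that whenever $\rho(\alpha)$ and $\rho(\beta)$ are orthogonal simple roots of $\lh$, then $\alpha$ and $\beta$ are orthogonal in $\lg$; it does this by localising at the rank-two subtorus (Lemma~\ref{lem:loc}) and invoking the full rank-two classification (Lemma~\ref{lem:rkH2}). Your argument is more direct and more elementary: you only need Lemma~\ref{lem:rhophi}, observing that if $\alpha+\beta\in\phi_\lg$ then $\rho(\alpha)+\rho(\beta)\in\phi_\lh$, which is impossible when $\rho(\alpha)$ and $\rho(\beta)$ lie in distinct ideals of $\lh$. This avoids the rank-two case analysis entirely, at the cost of proving only the weaker statement you actually need (no edge between $\Delta_{\lg_1}$ and $\Delta_{\lg_2}$) rather than the paper's stronger orthogonality claim. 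For this proposition your route is cleaner; the paper's route has the advantage of reusing a lemma that is needed anyway for the final classification in Theorem~\ref{th:introclass}.

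You also spell out the passage from the diagram split to $H=H_1\times H_2$ and to the minimal-rank property of each factor, which the paper compresses into ``the proposition follows''. Your bookkeeping there (root-space inclusions via weight considerations, coroots, and the product decomposition using $H_i\subseteq G_i$ inside $G=G_1\times G_2$) is correct; the step $\lh_\gamma\subseteq\sum_{\alpha'\in\rhobar^{-1}(\gamma)}\lg_{\alpha'}$ is indeed automatic from $T(H)\subseteq T$ and does not require the finer statement of Lemma~\ref{lem:fibrerho}.
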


\begin{proof}
By assumption, $\dyn_\lh$ is the disjoint union of two Dynkin
diagrams $\dyn_1$ and $\dyn_2$.
By Lemmas~\ref{lem:rkH2}~and~\ref{lem:loc}, 
for all $\alpha,\,\beta\in\Delta_\lg$ such that $\rho(\alpha)$ 
and $\rho(\beta)$ are orthogonal, $\alpha$ and $\beta$ are orthogonal.
We deduce that $\dyn_\lg$ is the disjoint union of 
$\rhodelta^{-1}(\dyn_1)$ and $\rhodelta^{-1}(\dyn_2)$.
The proposition follows.
\end{proof}\\

By Proposition~\ref{prop:redHsimple}, to classify all the 
spherical pairs $(G,H)$ of minimal rank with $G$ simple-simple adjoint 
and $H$ semi-simple, we may assume that $H$ is simple.
The Theorem~\ref{th:introclass} stated in the introduction lists all such
spherical pairs.
We can now prove this classification.\\


\begin{proof}[of Theorem~\ref{th:introclass}]
  By Proposition~\ref{prop:unicity}, it is sufficient to classify the 
possible triples $(\dyn_\lg,\dyn_\lh,\rhodelta)$.
By  Lemma~\ref{lem:rkH2}, we may assume that $\rk(H)\geq 3$.
Moreover, we may assume that $\Delta_\lh^2$ is non empty and different
from $\Delta_\lh$.
Let $\a\in\Delta_\lh^2$ and $\beta\in\Delta_\lh^1$.
By Lemma~\ref{lem:rkH2}, either $\a$ and $\beta$ are orthogonal or
$\alpha$ is the short root joined to the long root $\beta$ by a 
double edge.
One easily deduces that the colored Dynkin diagram of $\lh$ is one of the 
following:
$$
\psset{labelsep=0.4}
\begin{array}{l}
 \begin{pspicture}(-1,-0.5)(10.5,0.5)
\cnode[fillstyle=solid](0,0){0.2}{J}
\cnode[fillstyle=solid](2.5,0){0.2}{K}
\cnode[fillstyle=solid](5,0){0.2}{L}
\cnode[fillstyle=solid](7.5,0){0.2}{M}\cnode(10,0){0.2}{N}
\ncline{J}{K}
\ncline{L}{M}
\ncline[linestyle=dotted]{K}{L}
\ncline[offset=0.07]{M}{N}
\ncline[offset=-0.07]{M}{N}
\ncline[nodesep=1pt,linecolor=white]{M}{N}\lput{:U}{{\supdiag}}
  \end{pspicture}\\
 \begin{pspicture}(-1,-0.5)(10.5,0.5)
\cnode(0,0){0.2}{J}\cnode(2.5,0){0.2}{K}
\cnode(5,0){0.2}{L}\cnode(7.5,0){0.2}{M}
\cnode[fillstyle=solid](10,0){0.2}{N}
\ncline{J}{K}
\ncline{L}{M}
\ncline[linestyle=dotted]{K}{L}
\ncline[offset=0.07]{M}{N}
\ncline[offset=-0.07]{M}{N}
\ncline[nodesep=1pt,linecolor=white]{N}{M}\lput{:U}{{\supdiag}}
\end{pspicture}\\
\begin{pspicture}(-1,-0.5)(8,0.5)
\cnode(0,0){0.2}{G}\cnode(2.5,0){0.2}{H}
\cnode[fillstyle=solid](5,0){0.2}{I}
\cnode[fillstyle=solid](7.5,0){0.2}{J}

\ncline{G}{H}
\ncline{I}{J}
\ncline[offset=0.07]{H}{I}
\ncline[offset=-0.07]{H}{I}
\ncline[nodesep=1pt,linecolor=white]{I}{H}\lput{:U}{{\supdiag}}
  \end{pspicture}
\end{array}
$$

One easily deduces from Lemmas~\ref{lem:loc}~and~\ref{lem:rkH2} than
in the three preceding case the Dynkin diagram $\dyn_\lg$ is respectively:
$$
\begin{array}{l}
 \begin{pspicture}(-1,-2.5)(10.5,0.5)
\psset{labelsep=0.4}
\cnode(0,0){0.2}{A}\cnode(2.5,0){0.2}{B}
\cnode(5,0){0.2}{C}\cnode(7.5,0){0.2}{D}

\cnode(10,-1){0.2}{E}

\cnode(0,-2){0.2}{I}\cnode(2.5,-2){0.2}{H}
\cnode(5,-2){0.2}{G}\cnode(7.5,-2){0.2}{F}

\ncline{A}{B}
\ncline[nodesep=0.5,linestyle=dotted]{B}{C}
\ncline{C}{D}
\ncline{D}{E}
\ncline{E}{F}
\ncline{F}{G}
\ncline[linestyle=dotted]{G}{H}
\ncline{H}{I}
  \end{pspicture}\\
 \begin{pspicture}(-1,-2.5)(10.5,0.5)
\cnode(0,-1){0.2}{A}\cnode(2.5,-1){0.2}{B}
\cnode(5,-1){0.2}{C}\cnode(7.5,-1){0.2}{D}

\cnode(10,0){0.2}{E}\cnode(10,-2){0.2}{F}

\ncline{A}{B}
\ncline[linestyle=dotted]{B}{C}
\ncline{C}{D}
\ncline{D}{E}
\ncline{D}{F}
  \end{pspicture}\\
\begin{pspicture}(-1,-2.5)(8,0.5)
\cnode(0,-1){0.2}{A}\cnode(2.5,-1){0.2}{B}
\cnode(5,0){0.2}{C}\cnode(7.5,0){0.2}{D}
\cnode(5,-2){0.2}{E}\cnode(7.5,-2){0.2}{F}

\ncline{A}{B}
\ncline{B}{C}
\ncline{C}{D}
\ncline{B}{E}
\ncline{E}{F}
  \end{pspicture}

\end{array}
$$
The theorem follows.
\end{proof}

\bibliographystyle{smfalpha}
\bibliography{biblio}

\begin{center}
  -\hspace{1em}$\diamondsuit$\hspace{1em}-
\end{center}

\vspace{5mm}
\begin{flushleft}
Nicolas Ressayre\\
Universit{\'e} Montpellier II\\
D{\'e}partement de Math{\'e}matiques\\
Case courrier 051-Place Eug{\`e}ne Bataillon\\
34095 Montpellier Cedex 5\\
France\\
e-mail:~{\tt ressayre@math.univ-montp2.fr}  
\end{flushleft}
\end{document}